\newcommand\norm[1]{\left\lVert#1\right\rVert}
\newcommand{\Z}{\mathbb{Z}}
\def\R{\mathbb{R}}
\def\<{\langle}
\def\>{\rangle}
\newtheorem{theorem}{Theorem}[section]
\newtheorem{definition}[theorem]{Definition}
\newtheorem{example}[theorem]{Example}
\newtheorem{conjecture}[theorem]{Conjecture}
\begin{document}

\pagestyle{fancy}
\fancyhf{} 
\lhead{} 
\rhead{} 
\cfoot{} 
\renewcommand{\headrulewidth}{0pt} 
\renewcommand{\footrulewidth}{0pt} 

\newcommand{\C}{\mathbb{C}}
\newcommand{\supp}{\mathrm{supp}}
\newcommand{\Conj}{\mathrm{Conj}}


\title{Refined Additive Uncertainty Principle}
\author{Ivan Bortnovskyi}\email{ib538@cam.ac.uk}\address{Department of Pure Mathematics and Mathematical Statistics, Univeristy of Cambridge, Cambridge, United Kingdom, CB3 0WA}

\author{June Duvivier}\email{june@duvivier.us}\address{Department of Mathematics, Reed College, Portland, OR 97202}

\author{Alex Iosevich}\email{iosevich@gmail.com}
\address{Department of Mathematics, University of Rochester, Rochester, NY 14627}

\author{Josh Iosevich}\email{joshuaiosevich@gmail.com}\address{Department of Mathematics, Rochester Institute of Technology, Rochester, NY 14623}

\author{Say-Yeon Kwon}\email{sk9017@princeton.edu}\address{Department of Mathematics, Princeton University, Princeton, NJ 08544}

\author{Meiling Laurence}\email{meiling.laurence@yale.edu}\address{Department of Mathematics, Yale University, New Haven, CT 06511}

\author{Michael Lucas}\email{ml2130@cam.ac.uk}\address{Department of Pure Mathematics and Mathematical Statistics, Univeristy of Cambridge, Cambridge, United Kingdom, CB3 0WA}

\author{Tiancheng Pan}\email{tp545@cam.ac.uk}\address{Department of Pure Mathematics and Mathematical Statistics, Univeristy of Cambridge, Cambridge, United Kingdom, CB3 0WA}

\author{Eyvindur Palsson}\email{palsson@vt.edu}\address{Department of Mathematics, Virginia Tech, Blacksburg, VA 24061}

\author{Jennifer Smucker}\email{jennifer21@vt.edu}\address{Department of Mathematics, Virginia Tech, Blacksburg, VA 24061}

\author{Iana Vranesko}\email{yv2@williams.edu}\address{Department of Mathematics, Williams College, Williamstown, MA, 01267}

\maketitle

\begin{abstract}
Signal recovery from incomplete or partial frequency information is a fundamental problem in harmonic analysis and applied mathematics, with wide-ranging applications in communications, imaging, and data science. Historically, the classical uncertainty principles—such as those by Donoho and Stark \cite{DonohoStark}—have provided essential bounds relating the sparsity of a signal and its Fourier transform, ensuring unique recovery under certain support size constraints.

Recent advances have incorporated additive combinatorial notions, notably additive energy, to refine these uncertainty principles and capture deeper structural properties of signal supports. Building upon this line of work, we present a strengthened additive energy uncertainty principle for functions \(f:\mathbb{Z}_N^d\to\mathbb{C}\), introducing explicit correction terms that measure how far the supports are from highly structured extremal sets like subgroup cosets.

We have two main results. Theorem \ref{thm:new_principle} introduces a correction term which strictly improves the additive energy uncertainty principle from \cite{2025additiveenergyuncertaintyprinciple}, provided that the classical uncertainty principle is not satisfied with equality. Theorem \ref{thm:new_recovery} uses the improvement to obtain a better recovery condition. These theorems deliver strictly improved bounds over prior results whenever the product of the support sizes differs from the ambient dimension, offering a more nuanced understanding of the interplay between additive structure and Fourier sparsity. Importantly, we leverage these improvements to establish sharper sufficient conditions for unique and exact recovery of signals from partially observed frequencies, explicitly quantifying the role of additive energy in recoverability.

These results advance the theory of discrete signal recovery by providing stronger, more precise guarantees that bridge harmonic analysis and additive combinatorics, and open new pathways for analyzing sparsity and structure in finite discrete settings.
\end{abstract}

\tableofcontents
\newpage

\section{Introduction}
Let \(f:\mathbb{Z}_N^d\to\mathbb{C}\) be a function on the \(d\)-dimensional module over \(\mathbb{Z}_N:=\mathbb{Z}/N\mathbb{Z}\), where \(N\ge 2\) is an arbitrary positive integer. Our convention is to define the discrete Fourier transform (DFT) of $f$ by \(\hat{f} : \mathbb{Z}_N^d \to \mathbb{C}\) such that for \(m \in \mathbb{Z}_N^d\)
\begin{equation}
    \hat{f}(m) \coloneq N^{-d/2} \sum_{x \in \mathbb{Z}_N^d} f(x) \chi(-m \cdot x),
\end{equation}
where $\displaystyle\chi(t) := e^{\frac{2\pi i t}{N}}$.
We can then recover $f$ using the inverse Fourier transform
\begin{equation}
    f(x) = N^{-d/2} \sum_{m \in \mathbb{Z}_N^d} \hat{f}(m) \chi(m \cdot x).
\end{equation}

The problem of signal recovery is an inverse problem, where we aim to reconstruct the original function $f$ from a given set of observations of its transform $\hat{f}$. This paper aims to improve the conditions that are sufficient for the unique and exact recovery. 
The initial conditions are derived directly from the classical Fourier uncertainty principle. Specifically, Donoho and Stark established the following result, formulated here in arbitrary dimensions.

\begin{theorem}[{\cite{DonohoStark}}]
\label{thm:DS}
Let \( f : \mathbb{Z}_N^d \to \mathbb{C} \) be a finite signal in \(\mathbb{Z}_N^d\) with \( \supp(f) = E \subseteq \mathbb{Z}_N^d \) being the set of non-zero entries. Suppose that the set of unobserved frequencies \(\{f(m)\}_{m \in \mathbb{Z}^d_N}\) is the set \( S \subseteq \supp(\hat{f}) \subseteq \mathbb{Z}_N^d \). Then the signal \( f \) can be recovered uniquely from the observed frequencies if
\begin{equation}
\label{eq:DScondition}
|E| \cdot |S| < \frac{N^d}{2}.
\end{equation}
\end{theorem}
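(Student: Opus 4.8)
The plan is to derive the recovery statement from the classical additive (support-size) uncertainty principle together with a linearity/difference argument, so the real work is isolating and proving that uncertainty inequality. Concretely, I would first establish that for every nonzero $g:\mathbb{Z}_N^d\to\C$ one has
\begin{equation}
\label{eq:UP}
|\supp(g)|\cdot|\supp(\hat g)|\ge N^d,
\end{equation}
using the counting-measure norms $\norm{g}_1=\sum_x|g(x)|$, $\norm{g}_2=(\sum_x|g(x)|^2)^{1/2}$, and $\norm{g}_\infty=\max_x|g(x)|$. Reading off the definition of $\hat g$ gives the crude bound $\norm{\hat g}_\infty\le N^{-d/2}\norm{g}_1$, and Cauchy--Schwarz on the support of $g$ gives $\norm{g}_1\le|\supp(g)|^{1/2}\norm{g}_2$. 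Invoking Parseval in the normalization of the excerpt, namely $\norm{\hat g}_2=\norm{g}_2$, together with the analogous estimate $\norm{\hat g}_2\le|\supp(\hat g)|^{1/2}\norm{\hat g}_\infty$, I would chain these to obtain $\norm{\hat g}_\infty\le N^{-d/2}|\supp(g)|^{1/2}|\supp(\hat g)|^{1/2}\norm{\hat g}_\infty$; cancelling the factor $\norm{\hat g}_\infty>0$ (legitimate precisely because $g\ne 0$) yields \eqref{eq:UP}.

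With \eqref{eq:UP} in hand, the recovery step is a short difference argument. Suppose $f_1$ and $f_2$ are two signals, each with support of size at most $|E|$, that are consistent with the same observed frequencies, i.e. $\hat f_1(m)=\hat f_2(m)$ for every $m\notin S$. Setting $g=f_1-f_2$, linearity of the transform gives $\supp(g)\subseteq\supp(f_1)\cup\supp(f_2)$, so $|\supp(g)|\le 2|E|$, while $\hat g$ vanishes off $S$, so $\supp(\hat g)\subseteq S$ and $|\supp(\hat g)|\le|S|$. If $g$ were nonzero, \eqref{eq:UP} would force $N^d\le|\supp(g)|\cdot|\supp(\hat g)|\le 2|E|\cdot|S|$, i.e. $|E|\cdot|S|\ge N^d/2$, directly contradicting the hypothesis \eqref{eq:DScondition}. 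Hence $g=0$, so $f_1=f_2$, which is exactly the asserted uniqueness of the recovered signal.

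The hard part is really just \eqref{eq:UP}; everything after it is bookkeeping, but two points deserve care. First, the factor $\tfrac12$ in \eqref{eq:DScondition} is not cosmetic: it is the exact cost of controlling only the \emph{cardinality} of $\supp(f)$ rather than its location, which is what produces the bound $|\supp(g)|\le 2|E|$ from the union of two supports. If instead the support $E$ itself were known a priori, the same argument would go through under the weaker hypothesis $|E|\cdot|S|<N^d$, and I would note this distinction explicitly. Second, I expect the main subtlety to be matching normalization conventions so that the constants line up cleanly: the $N^{-d/2}$ in the definition of $\hat f$ is exactly what makes the transform an $\ell^2$-isometry, and one must be consistent about it in both the $\norm{\cdot}_\infty$ bound and in Parseval, or the power of $N$ in \eqref{eq:UP} will come out wrong. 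The role of the containment $S\subseteq\supp(\hat f)$ is to guarantee that the unobserved frequencies are genuinely among those carrying the signal's spectral mass, so that the observed data constrains the relevant part of the spectrum.
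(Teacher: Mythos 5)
Your proposal is correct and follows essentially the same route the paper itself sketches in its background section: form the difference $h=f_1-f_2$, note $|\supp(h)|\le 2|E|$ and $\supp(\hat h)\subseteq S$, and contradict the support-size uncertainty principle $N^d\le|\supp(h)|\,|\supp(\hat h)|$. The only addition is that you also prove that uncertainty inequality (via the standard $\norm{\cdot}_1$--$\norm{\cdot}_2$--$\norm{\cdot}_\infty$ chain with Parseval), which the paper simply cites from Donoho--Stark; your normalization bookkeeping there is correct.
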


However, Aldahleh-Iosevich-Iosevich-Jaimangal-Mayeli-Pack recently demonstrated that incorporating additive combinatorial structure, specifically through additive energy, yields stronger uncertainty principles. The additive energy of a set $A \subset \mathbb{Z}_N^d$ is defined as:

\begin{definition}[Additive Energy, \cite{2025additiveenergyuncertaintyprinciple}]
Let $A \subset \mathbb{Z}_N^d$. The \emph{additive energy} of $A$, denoted by $\Lambda_2(A)$, is given by
\[
\Lambda_2(A) = \#\{ (x_1, x_2, x_3, x_4) \in A^4 \mid x_1 + x_2 = x_3 + x_4 \}.
\]
\end{definition}

In Theorem 1.8. of \cite{2025additiveenergyuncertaintyprinciple}, the authors showed that additive energy could be incorporated into the uncertainty principle in the following way.

\begin{theorem}[Theorem 1.8, \cite{2025additiveenergyuncertaintyprinciple}] \label{thm:add_uncertainty}
Let $f : \mathbb{Z}_N^d \to \mathbb{C}$ be a nonzero function with support $E = \supp(f)$ and Fourier support $\Sigma = \supp(\hat{f})$. Then
\begin{equation}\label{eqn:add_uncertainty}
    N^d \le |E|\, \Lambda_2(\Sigma)^{1/3},
\end{equation}
and, by symmetry,
\begin{equation*}
    N^d \le |\Sigma|\, \Lambda_2(E)^{1/3}.
\end{equation*}
\end{theorem}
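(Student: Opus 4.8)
The plan is to upgrade the standard proof of the classical principle (Theorem \ref{thm:DS}) by replacing its crude Cauchy--Schwarz step with a three-term Hölder inequality tuned so that the \emph{fourth moment} of an indicator function---which is exactly additive energy---appears. I would work directly toward the first inequality $N^d \le |E|\,\Lambda_2(\Sigma)^{1/3}$; the second then follows by the symmetry between $f$ and $\hat f$ (equivalently, by running the same argument with the roles of $E$ and $\Sigma$ interchanged, using that $\Lambda_2$ is invariant under $x \mapsto -x$).

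First I would record the ``energy equals fourth moment'' identity. Writing $r(s) = \#\{(a,b)\in\Sigma^2 : a+b = s\} = (1_\Sigma * 1_\Sigma)(s)$, we have $\Lambda_2(\Sigma) = \|1_\Sigma * 1_\Sigma\|_2^2$; since the chosen normalization gives $\widehat{g*h} = N^{d/2}\,\hat g\,\hat h$, Plancherel yields $\Lambda_2(\Sigma) = N^d \sum_m |\widehat{1_\Sigma}(m)|^4$, that is $\|\widehat{1_\Sigma}\|_4 = N^{-d/4}\Lambda_2(\Sigma)^{1/4}$ (and the same holds for the inverse transform $\check{1_\Sigma}$, whose modulus is merely a reflection of $|\widehat{1_\Sigma}|$). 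This is the bridge between the combinatorial quantity and an $\ell^4$ norm.

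The core step exploits the Fourier-support constraint $\hat f = 1_\Sigma \cdot \hat f$. Applying the inverse transform together with the product-to-convolution rule gives $f = N^{-d/2}\,\check{1_\Sigma} * f$, so that for every $x$,
\[
 |f(x)| \le N^{-d/2}\sum_{y \in E} |\check{1_\Sigma}(x-y)|\,|f(y)|.
\]
Now I would insert the factor $1_E(y)$ and apply Hölder with exponents $(4,2,4)$ to the three factors $|\check{1_\Sigma}(x-y)|$, $|f(y)|$, and $1_E(y)$; translation invariance of the $\ell^4$ norm turns the first factor into $\|\check{1_\Sigma}\|_4$, producing
\[
 \|f\|_\infty \le N^{-d/2}\,\|\check{1_\Sigma}\|_4\,|E|^{1/4}\,\|f\|_2 .
\]
Pairing this with the elementary reverse bound $\|f\|_2 \le |E|^{1/2}\,\|f\|_\infty$ (valid because $f$ is supported on $E$) and cancelling $\|f\|_\infty \neq 0$, I obtain $N^{d/2} \le \|\check{1_\Sigma}\|_4\,|E|^{3/4}$. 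Substituting the energy identity and raising to the power $4/3$ then yields exactly $N^d \le |E|\,\Lambda_2(\Sigma)^{1/3}$.

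I expect the only delicate point to be the choice of Hölder exponents: the triple $(4,2,4)$ is forced by three simultaneous demands---that the indicator of $E$ contribute a first power $|E|$, that the $\ell^2$ norm of $f$ survive to be cancelled against the reverse inequality, and that the remaining factor be precisely the $\ell^4$ norm matching additive energy. Any other admissible triple fails to close the loop at the $1/3$ exponent. Checking that the bookkeeping of $N$-powers collapses to $N^d$ with no lost constant is routine once the exponents are pinned down, and the strict improvement over the classical bound $N^d \le |E|\,|\Sigma|$ is immediate from $\Lambda_2(\Sigma) \le |\Sigma|^3$.
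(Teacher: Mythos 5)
Your proof is correct. I checked each step: the identity $\Lambda_2(\Sigma)=N^d\sum_m|\widehat{1_\Sigma}(m)|^4$ (via $\Lambda_2(\Sigma)=\|1_\Sigma*1_\Sigma\|_2^2$, Plancherel, and $\widehat{g*h}=N^{d/2}\,\hat g\,\hat h$ under the paper's normalization), the reproducing identity $f=N^{-d/2}\,\check{1_\Sigma}*f$, the $(4,2,4)$ H\"older step, the reverse bound $\|f\|_2\le|E|^{1/2}\|f\|_\infty$, and the exponent bookkeeping ($N^{d/2}\le N^{-d/4}\Lambda_2(\Sigma)^{1/4}|E|^{3/4}$, raised to the power $4/3$) all go through; the symmetry reduction for the second inequality is also fine, since $\hat{\hat f}(x)=f(-x)$ and $\Lambda_2(-E)=\Lambda_2(E)$.

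It is, however, a genuinely different route from the one this paper runs. (The statement itself is imported from the cited reference, so the natural in-paper comparison is the proof of Theorem \ref{thm:new_principle}, which strengthens it.) The paper stays on the Fourier side throughout: it expands the fourth moment $\sum_{m\in\Sigma}|\hat f(m)|^4$ by Fourier inversion into a sum over additive quadruples in $E$, applies Cauchy--Schwarz, inverts again, splits the resulting sum over triples $(x,y,a)$ into degenerate and non-degenerate configurations (cases I, II, III), bounds each piece by a constant times $\sum_{m\in\Sigma}|\hat f(m)|^4$, and finally cancels that common factor. Your argument instead cancels $\|f\|_\infty$ after a single soft convolution-plus-H\"older step; it is shorter and much closer in spirit to the original Donoho--Stark proof, upgrading their Cauchy--Schwarz to a three-exponent H\"older chosen precisely so that the $\ell^4$ norm of $\check{1_\Sigma}$ (hence the additive energy) appears. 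What the paper's heavier expansion buys is exactly the refinement: the correction terms $C(E,\Sigma)$ in Theorem \ref{thm:new_principle} arise from separating the degenerate quadruples (the cases $x=y$ and $a=0$) from the generic ones, a distinction that is invisible after your H\"older step, which treats all quadruples of $\Sigma$-frequencies on an equal footing. So your proof recovers Theorem \ref{thm:add_uncertainty} cleanly and economically, but, as written, it would not extend to produce the strengthened inequality that is this paper's main result.
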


Since the expression is symmetric in $E$ and $\Sigma$, Fourier inversion allows us to interchange the roles of the support and the Fourier support. When $\Lambda_2(\Sigma) < |\Sigma|^3$, Theorem~\ref{thm:add_uncertainty} yields a stronger bound than the standard uncertainty principle. In contrast, if $\Sigma = a + H$ is a coset of a subgroup $H \leq \mathbb{Z}_N^d$, then $\Lambda(\Sigma) = |\Sigma|^3$, and the two principles coincide.

Our main contribution is a strengthened uncertainty principle with explicit correction terms that vanish precisely in the extremal cases. Our first result is the following. 
\begin{equation}\label{eqn:new_uncertainty}
N^d \leq |E| \big(\Lambda_2(\Sigma) - C(E,\Sigma)\big)^{1/3},
\end{equation}
\begin{equation*}
N^d \leq |\Sigma| \big(\Lambda_2(E) - C(\Sigma,E)\big)^{1/3},
\end{equation*}
where $C(E,\Sigma)$ is a non-negative constant depending on $E$ and $\Sigma$, which vanishes exactly when the equality \(N^d = |E|\,|\Sigma|\)
holds. In this case, it follows that both $E$ and $\Sigma$ achieve maximal additive energy, that is,
\[
\Lambda_2(E) = |E|^3 \quad \text{and} \quad \Lambda_2(\Sigma) = |\Sigma|^3.
\]

This maximal additive energy characterizes highly structured sets such as cosets of subgroups in $\mathbb{Z}_N^d$. When $N^d \neq |E|\,|\Sigma|$, the additive energy uncertainty inequalities \eqref{eqn:new_uncertainty} provide a strict improvement over the additive uncertainty principle \eqref{eqn:add_uncertainty} introduced in \cite{2025additiveenergyuncertaintyprinciple}.

Our main result is the following uncertainty principle.

\begin{theorem}[Stronger Additive Uncertainty Principle]\label{thm:new_principle}
Let $f:\Z_N^d\to\C$ be a nonzero function (signal) with support $\supp(f)=E$ and Fourier support $\supp(\hat{f})=\Sigma$.
\begin{equation}
N^d
\leq
|E|
\left(
\Lambda_2(\Sigma)
-|\Sigma|^2
\left(1-\frac{N^d}{|E||\Sigma|}\right)
-|\Sigma|(|\Sigma|-1)
\left(1-\sqrt{\frac{N^d}{|E||\Sigma|}}\sqrt{\frac{\Lambda_2(E)}{|E|^3}}\right)
\right)^{1/3}
\end{equation}
\begin{equation*}
N^d
\leq
|\Sigma|
\left(
\Lambda_2(E)
-|E|^2
\left(1-\frac{N^d}{|E||\Sigma|}\right)
-|E|(|E|-1)
\left(1-\sqrt{\frac{N^d}{|E||\Sigma|}}\sqrt{\frac{\Lambda_2(\Sigma)}{|\Sigma|^3}}\right)
\right)^{1/3}.
\end{equation*}
\end{theorem}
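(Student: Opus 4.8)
The plan is to recast the first inequality as the additive-energy lower bound $\Lambda_2(\Sigma)\ge(N^d/|E|)^3+C(E,\Sigma)$ and then take cube roots, the second inequality being its mirror image under Fourier inversion (which exchanges $E$ and $\Sigma$ together with the two energies). Throughout I abbreviate $\rho:=N^d/(|E||\Sigma|)\in(0,1]$ (so the classical principle reads $\rho\le 1$) and $\eta:=\Lambda_2(E)/|E|^3\in(0,1]$, so that the bracketed correction is exactly $C(E,\Sigma)=|\Sigma|^2(1-\rho)+|\Sigma|(|\Sigma|-1)(1-\sqrt{\rho\eta})$. The argument rests on two elementary identities: writing $r_{\Sigma-\Sigma}(\xi)=\#\{(m,m')\in\Sigma^2:m-m'=\xi\}$, one has $\sum_\xi r_{\Sigma-\Sigma}(\xi)^2=\Lambda_2(\Sigma)$, while by Plancherel $\sum_\xi|\widehat{1_E}(\xi)|^4=\Lambda_2(E)/N^d$, with $\widehat{1_E}(0)=N^{-d/2}|E|$.

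First I would introduce the concentration (Gram) matrix $K$ on $\ell^2(\Sigma)$ with entries $K(m,k)=N^{-d/2}\widehat{1_E}(m-k)$, which arises from the self-reproducing identity $\widehat f(m)=N^{-d/2}\sum_{k\in\Sigma}\widehat{1_E}(m-k)\widehat f(k)$ obtained from $f=f\cdot 1_E$ (via $\widehat{gh}=N^{-d/2}\widehat g * \widehat h$) together with $\supp(\widehat f)=\Sigma$. This $K$ is Hermitian and positive semidefinite with spectrum in $[0,1]$, its diagonal entries equal $N^{-d}|E|$ so $\mathrm{tr}(K)=|E||\Sigma|/N^d=1/\rho$, and the identity shows $\widehat f|_\Sigma$ is a fixed point, so $1$ is an eigenvalue. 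Consequently the non-principal eigenvalues are nonnegative and sum to $1/\rho-1=(1-\rho)/\rho$: this is the mechanism producing the factors $(1-\rho)$. Moreover the split of $\mathrm{tr}(K^2)=\sum_{m,k\in\Sigma}|K(m,k)|^2$ into its diagonal part ($|\Sigma|$ terms, total $N^{-d}|E|^2|\Sigma|$) and its off-diagonal part ($|\Sigma|(|\Sigma|-1)$ terms) is what produces the two shapes $|\Sigma|^2$ and $|\Sigma|(|\Sigma|-1)$ in $C(E,\Sigma)$.

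Next comes the bridge to $\Lambda_2(\Sigma)$. Since $\mathrm{tr}(K^2)=N^{-d}\sum_{m,k\in\Sigma}|\widehat{1_E}(m-k)|^2=N^{-d}\sum_\xi|\widehat{1_E}(\xi)|^2\,r_{\Sigma-\Sigma}(\xi)$, Cauchy--Schwarz in $\xi$ combined with the two identities above yields
\[
\mathrm{tr}(K^2)\;\le\;N^{-d}\Big(\sum_\xi|\widehat{1_E}(\xi)|^4\Big)^{1/2}\Big(\sum_\xi r_{\Sigma-\Sigma}(\xi)^2\Big)^{1/2}=N^{-3d/2}\,\Lambda_2(E)^{1/2}\,\Lambda_2(\Sigma)^{1/2},
\]
which rearranges to $\Lambda_2(\Sigma)\ge N^{3d}\,\mathrm{tr}(K^2)^2/\Lambda_2(E)$. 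It is precisely this step that injects $\Lambda_2(E)$, hence $\eta$, and, once the trace data (carrying $\rho$) are inserted, the combination $\sqrt{\rho\eta}$. The remaining task is then purely spectral: to lower bound $\mathrm{tr}(K^2)=\sum_i\mu_i^2$ using $\mu_1=1$, $\mu_i\in[0,1]$, and $\sum_i\mu_i=1/\rho$, for which the most spread-out configuration over the remaining at most $|\Sigma|-1$ eigenvalues is the natural extremal candidate.

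I expect the main obstacle to be the final algebraic reconciliation: assembling the spectral lower bound on $\mathrm{tr}(K^2)$ with the bridge inequality and massaging the result into exactly the additive form $(N^d/|E|)^3+|\Sigma|^2(1-\rho)+|\Sigma|(|\Sigma|-1)(1-\sqrt{\rho\eta})$, rather than an equivalent-looking product or ratio of the same ingredients. The guiding constraint is sharpness: I would verify that the diagonal/off-diagonal split, the spectral estimate, and the Cauchy--Schwarz bridge all become equalities \emph{simultaneously} exactly when $E$ and $\Sigma$ are cosets of dual subgroups with $|E||\Sigma|=N^d$, forcing $\rho=\eta=1$ and collapsing $C(E,\Sigma)$ to $0$. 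This simultaneous-equality analysis both pins down the constants and shows $C(E,\Sigma)\ge 0$ with vanishing precisely at the extremal case $N^d=|E||\Sigma|$, so the bracket stays nonnegative and the cube-root statement is legitimate. The companion inequality then follows verbatim with the roles of $E$ and $\Sigma$ interchanged.
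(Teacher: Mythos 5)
Your spectral setup is correct as far as it goes, and it is genuinely different from the paper's argument: the matrix $K(m,k)=N^{-d/2}\widehat{1_E}(m-k)$ on $\ell^2(\Sigma)$ is indeed Hermitian and positive semidefinite with spectrum in $[0,1]$, it fixes $\hat f|_\Sigma$, its trace is $|E||\Sigma|/N^d=1/\rho$, and the Cauchy--Schwarz bridge $\mathrm{tr}(K^2)\le N^{-3d/2}\Lambda_2(E)^{1/2}\Lambda_2(\Sigma)^{1/2}$ is valid. (The paper instead expands $\sum_{m\in\Sigma}|\hat f(m)|^4$ by Fourier inversion, splits the resulting quadruple count into the cases $x\neq y,\ a\neq 0$; $x=y$; $x\neq y,\ a=0$, and bounds each piece by a multiple of $\sum_m|\hat f(m)|^4$; the correction terms arise from excising the degenerate quadruples.) However, the step you defer as the ``final algebraic reconciliation'' is not a technical obstacle but a genuine gap: it cannot be carried out. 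The strongest conclusion your ingredients allow is the product-form bound
\[
\Lambda_2(\Sigma)\;\ge\;\frac{N^{3d}}{\Lambda_2(E)}\,\mathrm{tr}(K^2)^2\;\ge\;\frac{N^{3d}}{\Lambda_2(E)}\left(1+\frac{(1/\rho-1)^2}{|\Sigma|-1}\right)^{2},
\]
in which $\Lambda_2(E)$ divides the entire right-hand side, whereas the theorem demands the additive form $N^{3d}/|E|^3+|\Sigma|^2(1-\rho)+|\Sigma|(|\Sigma|-1)(1-\sqrt{\rho\eta})$. These are not interconvertible, and the product form is strictly weaker on realizable examples.

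Concretely, take $f(x)=1+\varepsilon\,\delta_0(x)$ on $\Z_N^d$ with small $\varepsilon\neq 0$, so that $E=\Sigma=\Z_N^d$, $|E|=|\Sigma|=N^d$, $\Lambda_2(E)=\Lambda_2(\Sigma)=N^{3d}$, $\rho=N^{-d}$, $\eta=1$. Here $\widehat{1_E}=N^{d/2}\delta_0$, so $\mathrm{tr}(K^2)=N^d$ \emph{exactly}; your spectral lower bound $1+(N^d-1)^2/(N^d-1)=N^d$ is therefore tight and no refinement of the spectral step can help. Your route then yields $\Lambda_2(\Sigma)\ge N^{3d}\cdot N^{2d}/N^{3d}=N^{2d}$, while the theorem asserts $\Lambda_2(\Sigma)\ge 1+N^{2d}(1-N^{-d})+N^d(N^d-1)(1-N^{-d/2})\approx 2N^{2d}$, which exceeds $N^{2d}$ as soon as $N^d\ge 5$. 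So your chain terminates strictly below what must be proved; the loss sits entirely in the Cauchy--Schwarz bridge, which couples $\Lambda_2(E)$ and $\Lambda_2(\Sigma)$ multiplicatively and is off by a factor $N^{d/2}$ in this regime. Repairing the argument requires replacing the bridge itself (for instance by working with the fourth moment of $\hat f$ and removing the degenerate quadruples, as the paper does), not sharpening the spectral estimate or the final algebra. A minor additional slip: the diagonal part of $\mathrm{tr}(K^2)$ is $N^{-2d}|E|^2|\Sigma|$, not $N^{-d}|E|^2|\Sigma|$.
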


This result fits the general form of the new uncertainty principle stated in \eqref{eqn:new_uncertainty}, with explicit constants given by  
\[
C(E, \Sigma) =  |\Sigma|^2 \left(1 - \frac{N^d}{|E||\Sigma|}\right) 
+|\Sigma| (|\Sigma| - 1) \left(1 - \sqrt{\frac{N^d}{|E||\Sigma|}} \sqrt{\frac{\Lambda_2(E)}{|E|^3}} \right),
\]
and
\[
C(\Sigma, E) =  |E|^2 \left(1 - \frac{N^d}{|E||\Sigma|}\right) 
+ |E| (|E| - 1) \left(1 - \sqrt{\frac{N^d}{|E||\Sigma|}} \sqrt{\frac{\Lambda_2(\Sigma)}{|\Sigma|^3}} \right).
\]
\ \\

This theorem sharpens the additive uncertainty principle by subtracting correction terms $C(E,\Sigma)$ and $C(\Sigma,E)$ that quantify how far the supports $E$ and $\Sigma$ are from this extremal, highly structured case. When the product $|E||\Sigma|$ is strictly greater than $N^d$, the new inequalities provide a stricter bound than the previously known principle in \eqref{eqn:add_uncertainty} from \cite{2025additiveenergyuncertaintyprinciple}.

Let us demonstrate it with the following example:

\begin{example}
    We will work in $\mathbb{Z}_N^2$. Fix an integer $1 \leq m < N$ such that $m\nmid N$ and let the support of our signal be $A$. Define $A$ as follows:\[
    I = \{0, 1,\cdots, m-1\}, \quad A = I\times I \subset \mathbb{Z}_N^2.
    \]
    The additive energy of $A$ is the number of quadruples such that $(x_1,y_1) + (x_2,y_2) = (x_3,y_3) + (x_4,y_4)$, where $(x_i,y_i) \in \mathbb{A}, i \in \{1,2,3,4\}$. This is the same as counting the number of solutions to this system: \[
    x_1+x_2 = x_3+x_4, \quad y_1+y_2 = y_3+y_4, \quad x_i,y_i\in I, i \in \{1,2,3,4\}.
    \]
    Hence, notice that $\Lambda_2(A) = \Lambda_2(I)^2$, where $\Lambda_2(I)$ is the additive energy of $I$.\\
    Let us compute $\Lambda_2(I)$. Let \(r(t) = \#\{(x,y) \in I^2 : x + y = t\}\) for \(t \in \{0, \dots, 2m-2\}\). Then
    \[
    r(t) =
    \begin{cases}
    t + 1, & 0 \le t \le m-1, \\[4pt]
    2m - 1 - t, & m \le t \le 2m - 2.
    \end{cases}
    \]
    Hence
    \[
    \Lambda_2(I) = \sum_{t=0}^{2m-2} r(t)^2
      = 2\sum_{k=1}^{m} k^2 - m^2
      = 2 \cdot \frac{m(m+1)(2m+1)}{6} - m^2
      = \frac{2m^3 + m}{3},
      \]
      \[
      \Lambda_2(A) = \left(\frac{2m^3 + m}{3}\right)^2.
      \]
Let us also notice that $|A| = m^2$.\\
    The additive uncertainty principle introduced in \cite{2025additiveenergyuncertaintyprinciple} yields:\[
    N^2 \leq |\Sigma| (\Lambda_2(A))^{\frac{1}{3}} = |\Sigma|\left(\frac{2m^3 + m}{3}\right)^\frac{2}{3}.
    \]
    Whereas the improved uncertainty principle \ref{thm:new_principle} yields:\begin{align*}
        N^2 \leq |\Sigma| \left(\left(\frac{2m^3 + m}{3}\right)^2 - m^2 \left(1-\frac{N^2}{m^2|\Sigma|}\right) - m^2(m^2-1)\left(1-\sqrt{\frac{N^2\Lambda_2(\Sigma)}{m^2|\Sigma|^4}}\right)\right)^\frac{1}{3}.
    \end{align*}
    Because we supposed $m \nmid N$ and the classical uncertainty principle states $\frac{N^d}{|E||\Sigma|}\leq 1$, then $\frac{N^2}{m^2|\Sigma|} \neq 1$ and $1- \frac{N^2}{m^2|\Sigma|} = \mu > 0$. Therefore, we get the following inequalities: \begin{align*}
        &|\Sigma| \left(\left(\frac{2m^3 + m}{3}\right)^2 - m^2 \left(1-\frac{N^2}{m^2|\Sigma|}\right) - m^2(m^2-1)\left(1-\sqrt{\frac{N^2\Lambda_2(\Sigma)}{m^2|\Sigma|^4}}\right)\right)^\frac{1}{3} \\
        &\leq |\Sigma| \left(\left(\frac{2m^3 + m}{3}\right)^2 - m^2 \mu - m^2(m^2-1)\mu\right)^\frac{1}{3} < |\Sigma|\left(\frac{2m^3 + m}{3}\right)^\frac{2}{3}.
    \end{align*}
\end{example}

Building on the strengthened additive energy uncertainty principle, we derive a sufficient condition for the unique recovery of a signal when certain frequencies are unobserved. This result quantifies how the additive energy of the unobserved frequency set influences recoverability. 

\begin{theorem}[Additive Recovery Condition]\label{thm:new_recovery}
    Suppose frequencies in $S \subseteq \mathbb{Z}_N^d$ are unobserved, and that for all subsets $T \subseteq \mathbb{Z}_N^d$ with $|T| \leq 2|E|$ the additive energy satisfies
    \[
    \Lambda_2(T) \leq K |T|^\alpha,
    \]
    where $K \geq 0$ and $2 \leq \alpha \leq 3$. If
    \begin{align*}
        &|E|^3 \bigg(\Lambda_2(S) - |E|^3 |S| (|S| - 1) \left[ 1 - \sqrt{\frac{K}{(2|E|)^{3-\alpha}}} \sqrt{\frac{N^d}{2|E||S|}} \right] \\
        &\quad - |E|^3 |S|^2 \left( 1 - \frac{N^d}{2|E||S|} \right) \bigg)< \frac{N^{3d}}{8},
    \end{align*}
    then the function $f$ can be uniquely recovered.
\end{theorem}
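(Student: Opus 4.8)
The plan is to deduce the recovery guarantee from the sharpened uncertainty principle (Theorem~\ref{thm:new_principle}) applied to the difference of two competing signals, in the spirit of the Donoho--Stark argument behind Theorem~\ref{thm:DS}. Suppose toward a contradiction that $f$ is not uniquely recoverable from the observed frequencies. Then there are two distinct signals $f_1,f_2\colon\Z_N^d\to\C$, each supported on a set of size at most $|E|$, that agree at every observed frequency. Their difference $h:=f_1-f_2$ is nonzero, $\hat h$ vanishes off $S$ so that $\Sigma_h:=\supp(\hat h)\subseteq S$, and $\supp(h)\subseteq\supp(f_1)\cup\supp(f_2)$, so that $E_h:=\supp(h)$ satisfies $|E_h|\le 2|E|$. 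It therefore suffices to show that the stated hypothesis forbids any such $h$.

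First I would apply the first inequality of Theorem~\ref{thm:new_principle} to $h$, taking physical support $E_h$ and Fourier support $\Sigma_h$, to get $N^d\le |E_h|\bigl(\Lambda_2(\Sigma_h)-C(E_h,\Sigma_h)\bigr)^{1/3}$, with the bracket strictly positive since $h\neq 0$. Cubing and using $|E_h|^3\le (2|E|)^3=8|E|^3$ together with positivity of the bracket gives $\tfrac{N^{3d}}{8}\le |E|^3\bigl(\Lambda_2(\Sigma_h)-C(E_h,\Sigma_h)\bigr)$; this is the source of the factor $\tfrac18$ in the statement.

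It then remains to bound $\Lambda_2(\Sigma_h)-C(E_h,\Sigma_h)$ above by the bracketed quantity in the hypothesis. The ingredients are monotonicity of additive energy under inclusion, giving $\Lambda_2(\Sigma_h)\le\Lambda_2(S)$ for the leading term; the size constraints $|\Sigma_h|\le|S|$ and $|E_h|\le 2|E|$; the polynomial energy hypothesis applied to $E_h$ (which applies since $|E_h|\le 2|E|$) to replace $\Lambda_2(E_h)$ by $K|E_h|^\alpha$; and the classical uncertainty bound $|E_h|\,|\Sigma_h|\ge N^d$, used to control the signs of the factors $1-\tfrac{N^d}{|E_h||\Sigma_h|}$ and of the square-root bracket. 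The target is to show that the extremal configuration $\Sigma_h=S$, $|E_h|=2|E|$, $\Lambda_2(E_h)=K(2|E|)^{\alpha}$ maximizes $\Lambda_2(\Sigma_h)-C(E_h,\Sigma_h)$ over all admissible pairs, so that this difference is at most $\Lambda_2(S)-C^{*}$, where $C^{*}$ is the correction evaluated at that configuration (precisely the correction appearing in the hypothesis). Combining with the previous step yields $\tfrac{N^{3d}}{8}\le |E|^3\bigl(\Lambda_2(S)-C^{*}\bigr)$, contradicting the hypothesis that this quantity is $<\tfrac{N^{3d}}{8}$, and forcing $h=0$.

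I expect the last step to be the main obstacle. The difficulty is that the naive term-by-term estimates on $C(E_h,\Sigma_h)$ point in opposite directions: increasing $|\Sigma_h|$ raises the leading term $\Lambda_2(\Sigma_h)$ but simultaneously raises the subtracted term $|\Sigma_h|^2\bigl(1-\tfrac{N^d}{|E_h||\Sigma_h|}\bigr)$ and the prefactor $|\Sigma_h|(|\Sigma_h|-1)$, while the admissible range of $\Lambda_2(E_h)/|E_h|^3$ is itself constrained against $|E_h|$ through $|E_h|\,|\Sigma_h|\ge N^d$, so no single substitution controls the whole expression. The way through is to treat the combination $\Lambda_2(\Sigma_h)-|\Sigma_h|^2\bigl(1-\tfrac{N^d}{|E_h||\Sigma_h|}\bigr)$ as one object and verify, via a combined monotonicity analysis rather than isolated estimates, that the full expression attains its maximum at the extremal corner; here the elementary lower bound $\Lambda_2(A)\ge |A|^2$ and the classical uncertainty inequality are exactly what certify that each factor carries the correct sign. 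Once this extremization is established, the remaining algebra is routine.
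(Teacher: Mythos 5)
Your overall skeleton --- pass to the difference $h$ of two competing signals, apply Theorem~\ref{thm:new_principle} with physical support $E_h$ and Fourier support $\Sigma_h\subseteq S$, and contradict the hypothesis --- is exactly the paper's argument. The genuine gap is in your order of operations at the cubing step. You discard the factor $|E_h|^3$ first (via $|E_h|^3\le 8|E|^3$ and positivity of the bracket) and then try to show that the bracket $B:=\Lambda_2(\Sigma_h)-C(E_h,\Sigma_h)$ \emph{by itself} is maximized at the corner $\Sigma_h=S$, $|E_h|=2|E|$, $\Lambda_2(E_h)=K(2|E|)^{\alpha}$. That extremization claim is false, not merely unproven. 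Expanding the corrections,
\[
B=\bigl(\Lambda_2(\Sigma_h)-2|\Sigma_h|^2+|\Sigma_h|\bigr)
+\frac{|\Sigma_h|\,N^d}{|E_h|}
+|\Sigma_h|(|\Sigma_h|-1)\sqrt{\frac{N^d\,\Lambda_2(E_h)}{|\Sigma_h|\,|E_h|^4}},
\]
the first group is indeed non-negative and monotone under $\Sigma_h\subseteq S$ (it counts non-degenerate additive quadruples), but the two remaining positive terms are \emph{decreasing} in $|E_h|$: the middle one scales like $|E_h|^{-1}$, and under your energy hypothesis the last one scales like $K^{1/2}|E_h|^{\alpha/2-2}$ with $\alpha\le 3$. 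So substituting $|E_h|=2|E|$ does not bound them from above. For instance, with $N^d=100$, $|E|=10$, $|S|=50$, $\Sigma_h=S$, $|E_h|=5$, $\alpha=3$, $K=1$, $\Lambda_2(E_h)=|E_h|^2$ (all consistent with every constraint you invoke: $|E_h|\le 2|E|$, $\Lambda_2(E_h)\le K|E_h|^\alpha$, $|E_h||\Sigma_h|\ge N^d$), one computes $B$ to exceed the corner value by more than $600$; the excess $|S|N^d(\tfrac1{|E_h|}-\tfrac1{2|E|})$ in the middle term dwarfs any compensation from the square-root term. Since your argument uses only these numerical constraints, it cannot close; and your proposed remedy of grouping $\Lambda_2(\Sigma_h)-|\Sigma_h|^2\bigl(1-\tfrac{N^d}{|E_h||\Sigma_h|}\bigr)$ into one object addresses the competition in $|\Sigma_h|$ but leaves the $|E_h|$-direction, where the failure occurs, untouched.

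The paper's proof avoids this precisely by never detaching $|E_h|^3$ from the bracket. Starting from $N^{3d}\le|E_h|^3B$, it expands $B$ as above and bounds $|E_h|^3$ times each term separately; after this multiplication every term is non-decreasing in $|E_h|$, so the corner substitutions become legitimate: $|E_h|^3\bigl(\Lambda_2(\Sigma_h)-2|\Sigma_h|^2+|\Sigma_h|\bigr)\le 8|E|^3\bigl(\Lambda_2(S)-2|S|^2+|S|\bigr)$ by non-negativity and inclusion-monotonicity of the quadruple count; $|E_h|^3\cdot\frac{|\Sigma_h|N^d}{|E_h|}=|E_h|^2|\Sigma_h|N^d\le 4|E|^2|S|N^d$; and $|E_h|^3\,|\Sigma_h|(|\Sigma_h|-1)\sqrt{\frac{N^d\Lambda_2(E_h)}{|\Sigma_h||E_h|^4}}\le|E_h|^{1+\alpha/2}|\Sigma_h|^{1/2}(|\Sigma_h|-1)\sqrt{KN^d}\le(2|E|)^{1+\alpha/2}|S|^{1/2}(|S|-1)\sqrt{KN^d}$. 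Summing and regrouping these three bounds gives exactly $8|E|^3$ times the bracketed quantity in the hypothesis, whence $N^{3d}<N^{3d}$. In short, your plan is repaired not by a cleverer extremization of the bracket but by refusing to separate the factor $|E_h|^3$ from it.
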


Therefore, we get better recovery conditions using the stronger additive uncertainty principle (Theorem \ref{thm:new_principle}) than was previously derived from the original additive uncertainty principle established by Aldahleh-Iosevich-Iosevich-Jaimangal-Mayeli-Pack in \cite{2025additiveenergyuncertaintyprinciple}.

We now recall two probabilistic recovery results from Burstein, Iosevich, Mayeli, and Nathan \cite{burstein2025fourierminimizationimputationtime}, which are relevant when the set of missing values is chosen randomly with uniform probability. These results provide theoretically stronger conditions than the one in Theorem \ref{thm:new_recovery} under which a function can be uniquely recovered, either via least squares or using Logan's $L^1$-minimization method (see \cite{Logan1965}, \cite{burstein2025fourierminimizationimputationtime} for details).  

\begin{theorem}[Recovery via least squares, Theorem 1.21 \cite{burstein2025fourierminimizationimputationtime}]\label{thm:probability_recovery_least_squares}
Let $f : \mathbb{Z}_N \to \mathbb{C}$ be supported in $E \subset \mathbb{Z}_N$. Assume that $\{\hat{f}(m)\}_{m \in S}$ are unobserved, where $S$ is a subset of $\mathbb{Z}_N^d$ of size $\lceil N^{2/q} \rceil$ for some $q > 2$, chosen randomly with uniform probability. Then there exists a constant $C(q)$ depending only on $q$ such that with probability $1 - o_N(1)$, if
\begin{equation}\label{eq:thm121_condition}
|E| < \frac{N}{2} \left( \frac{C(q)}{\varepsilon} \right)^{\frac{1}{2 - 2/q}},
\end{equation}
then $f$ can be recovered uniquely, where $\varepsilon = o_N(1)$.
\end{theorem}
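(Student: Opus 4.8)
The plan is to recast least-squares recoverability as a spectral separation condition for a random character Gram matrix and then verify that condition with high probability by a moment computation. First I would observe that, knowing $\supp(f)\subseteq E$ and observing $\hat f$ on the complement $S^{c}$, the least-squares estimate returns the true $f$ for every admissible signal exactly when the restriction map $R\colon g\mapsto(\hat g(m))_{m\in S^{c}}$ is injective on the space of functions supported in $E$. By Parseval (with the unitary normalization fixed in the introduction), $\sum_{m\in S^{c}}|\hat g(m)|^{2}=\norm{g}_2^{2}-\sum_{m\in S}|\hat g(m)|^{2}$, so injectivity is equivalent to the strict inequality
\[
\sup_{\substack{\supp g\subseteq E\\ \norm{g}_2=1}}\ \sum_{m\in S}|\hat g(m)|^{2}\;<\;1.
\]
Writing $w_{m}=(\chi(m\cdot x))_{x\in E}\in\C^{E}$ and $B=\sum_{m\in S}w_{m}w_{m}^{*}$, the left-hand side equals $N^{-d}\lambda_{\max}(B)$, so recovery succeeds precisely when $\lambda_{\max}(B)<N^{d}$.

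Second, I would compute the mean of $B$ over the uniform random choice of $S$. Since each $m\in\Z_N^{d}$ is included with probability $|S|/N^{d}$ and $\sum_{m}\chi(m\cdot(x-x'))=N^{d}\delta_{x,x'}$, one gets $\mathbb{E}[B]=|S|\,I=\lceil N^{2/q}\rceil\,I$ exactly; a random missing set captures, in expectation, only an $N^{-d}|S|$ fraction of any signal's energy. The entire content of the theorem is therefore a concentration statement: it suffices to show that, with probability $1-o_{N}(1)$, the fluctuation $\norm{B-|S|\,I}_{\mathrm{op}}$ stays below $N^{d}-|S|$, since then $\lambda_{\max}(B)\le|S|+\norm{B-|S|\,I}_{\mathrm{op}}<N^{d}$. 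I would control this fluctuation by a $q$-th moment (Schatten-norm) estimate rather than a black-box matrix Bernstein bound, because the precise exponent $2-2/q$ in the statement is tied to $q$. Concretely, I would bound $\mathbb{E}_{S}\big[\mathrm{tr}\,(B-|S|\,I)^{q}\big]$ by expanding into the character correlations $\langle w_{m_i},w_{m_j}\rangle=\sum_{x\in E}\chi((m_j-m_i)\cdot x)$ and exploiting the cancellation in these exponential sums; the surviving (near-diagonal) contributions scale as explicit powers of $|E|$ and $|S|$, producing a tail bound of the form $\mathbb{P}\big(\norm{B-|S|\,I}_{\mathrm{op}}\ge t\big)\le\varepsilon$ once $|E|$ lies below a threshold of shape $\tfrac{N}{2}\big(C(q)/\varepsilon\big)^{1/(2-2/q)}$.

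Finally, I would assemble the pieces: taking $t=N^{d}-|S|$ and $\varepsilon=o_{N}(1)$, the moment estimate forces $\lambda_{\max}(B)<N^{d}$ off an event of probability $o_{N}(1)$, which is exactly the asserted unique recoverability under the displayed size constraint on $|E|$. The main obstacle is this concentration step: the per-signal quantity $\sum_{m\in S}|\hat g(m)|^{2}$ is tiny for each fixed $g$, but passing to a uniform bound over the entire $E$-supported sphere is an operator-norm control, and extracting the sharp $q$-dependence requires organizing the combinatorics of the $q$-fold tuples $(m_1,\dots,m_q)\in S^{q}$ together with the cancellation in the associated character sums, rather than settling for the lossy $\sqrt{\log|E|}$ factor a generic matrix concentration inequality would contribute. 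A secondary technical point is that $S$ is sampled without replacement, so I would either compare to an i.i.d.\ (with-replacement) model by a standard coupling or absorb the negative dependence directly into the moment bounds; since $\mathbb{E}[B]=|S|\,I$ already holds exactly without replacement, only the higher moments need this care.
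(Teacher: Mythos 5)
First, a point of orientation: the paper you were given does not prove this statement at all --- it is quoted verbatim as background (Theorem 1.21 of \cite{burstein2025fourierminimizationimputationtime}), and the proof lives in that reference. So the comparison is necessarily against the argument there, which runs through Bourgain's $\Lambda(q)$-set theorem: a uniformly random $S$ of size $\lceil N^{2/q}\rceil$ is, with probability $1-o_N(1)$, a $\Lambda(q)$ set with constant $C(q)$, meaning $\norm{\sum_{m\in S} a_m \chi(m\,\cdot)}_{L^q} \le C(q)\norm{a}_{\ell^2}$; dualizing this and applying H\"older on the support (for $h$ supported on $T$, $\norm{h}_{q'} \le |T|^{1/2-1/q}\norm{h}_2$) yields $\sum_{m\in S}|\hat{h}(m)|^2 \le C(q)^2\,|T|^{1-2/q}\,(\ldots)\,\norm{h}_2^2$, and the exponent $\frac{1}{2-2/q}$ in the threshold on $|E|$ falls out of exactly this duality-plus-H\"older step. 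Your opening reduction is fine as far as it goes: unique least-squares recovery on a known support is equivalent to $\lambda_{\max}(B) < N^d$ for $B=\sum_{m\in S} w_m w_m^*$, and $\mathbb{E}[B]=|S|\,I$ is correct. But the entire theorem is hidden inside your second step. The ``moment computation exploiting cancellation in the near-diagonal terms'' that you defer is not a routine estimate --- it is the content of Bourgain's theorem (equivalently, Talagrand's selector-process machinery), a genuinely deep result; a naive expansion of $\mathrm{tr}\,(B-|S|\,I)^{q}$ does not deliver the stated $q$-dependence, and treating it as a step to be filled in later is the gap, not a technicality.

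There is a second, structural problem with your reduction. You require injectivity of the restriction map only on functions supported in the single known set $E$, so your Gram matrix $B$ is indexed by $E$. But the uniqueness asserted here (as in the Donoho--Stark framework the surrounding paper uses, and as the factor $N/2$ in the threshold signals) concerns candidate signals whose support merely has size $|E|$: the relevant difference $h=f-g$ is supported on an \emph{unknown} set of size up to $2|E|$. To run your argument you would need $\lambda_{\max}$ control uniformly over all $\binom{N}{2|E|}$ possible supports, and a union bound over these destroys the constants; the $\Lambda(q)$ route avoids this entirely because the bound $\sum_{m\in S}|\hat{h}(m)|^2 \lesssim |\supp(h)|^{1-2/q}\norm{h}_2^2$ depends only on the \emph{size} of the support, hence is automatically uniform over its position. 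Your closing remark about sampling without replacement is, by comparison, genuinely minor and handled as you suggest. So: the skeleton (spectral reformulation, mean computation, reduction to a fluctuation bound) is sound, but the two load-bearing steps --- the sharp $q$-dependent concentration and the uniformity over supports --- are precisely what your sketch leaves unproved, and both are supplied in \cite{burstein2025fourierminimizationimputationtime} by invoking Bourgain's theorem rather than by direct matrix concentration.
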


\begin{theorem}[Recovery via Logan's method, Theorem 1.23 \cite{burstein2025fourierminimizationimputationtime}]\label{thm:probability_recovery_logan}
Let $S$ be a random subset of $\mathbb{Z}_N^d$ of size $\lceil N^{2d/q} \rceil$ for some $q > 2$. Suppose that $f : \mathbb{Z}_N^d \to \mathbb{C}$ and that the frequencies $\{f_b(m)\}_{m \in S}$ are unobserved. Assume that $f$ is supported in $E \subset \mathbb{Z}_N^d$. If
\begin{equation}\label{eq:thm123_condition}
|E| < N^d \cdot 4 \left( \frac{C(q)}{\varepsilon} \right)^{\frac{2q}{q-2}},
\end{equation}
then $f$ can be recovered using Logan's method with probability at least $1 - \varepsilon$.
\end{theorem}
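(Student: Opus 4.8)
The plan is to read the statement as an assertion about the success of Logan's $\ell^1$-minimization method and to reduce it to a probabilistic restriction estimate for functions that are band-limited to the random set $S$. Recall that Logan's method returns the minimizer of $\|g\|_1$ over all $g:\Z_N^d\to\C$ whose Fourier transform agrees with $\hat f$ on the observed frequencies $\Z_N^d\setminus S$; the feasible set is exactly $f+V$, where $V=\{h:\supp(\hat h)\subseteq S\}$. My first step is the standard null-space criterion: $f$, which is supported on $E$, is the unique minimizer provided every nonzero $h\in V$ satisfies $\sum_{x\in E}|h(x)|<\tfrac12\|h\|_1$. This is immediate from the reverse triangle inequality, which gives $\|f+h\|_1\ge \|f\|_1+\|h\|_1-2\sum_{x\in E}|h(x)|$ after splitting the sum over $E$ and its complement and using $\supp(f)=E$. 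The problem thus reduces to showing that, with probability at least $1-\varepsilon$ over the random choice of $S$, no $S$-band-limited function concentrates half or more of its $\ell^1$ mass on the fixed set $E$.

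Second, I would convert this non-concentration statement into a quantitative ratio bound by combining Hölder's inequality with a lower bound for $\|h\|_1$ coming from the band-limiting. Hölder with exponent $q$ gives $\sum_{x\in E}|h(x)|\le |E|^{1-1/q}\|h\|_q$, while $\supp(\hat h)\subseteq S$, Parseval, and Cauchy--Schwarz give $\|h\|_1\ge N^{d/2}\|\hat h\|_\infty\ge N^{d/2}|S|^{-1/2}\|h\|_2$. Dividing, one obtains
\[
\frac{\sum_{x\in E}|h(x)|}{\|h\|_1}\;\le\;\frac{|E|^{1-1/q}\,|S|^{1/2}}{N^{d/2}}\cdot\frac{\|h\|_q}{\|h\|_2}.
\]
The decisive input is therefore a discrete extension estimate $\|h\|_q\le A\,\|h\|_2$ valid for every $h\in V$, with a constant $A=A(q,S)$ depending on the random frequency set. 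Inserting such an estimate and recalling $|S|=\lceil N^{2d/q}\rceil$ collapses the null-space criterion into an explicit upper bound on $|E|$ of the shape appearing in \eqref{eq:thm123_condition}.

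Third, and this is where the randomness is exploited, I would establish the extension estimate on a high-probability event by the moment method. One bounds a suitable moment of the extension operator $\phi\mapsto\check\phi$ restricted to $\ell^2(S)$, averaged over the uniform choice of $S$ of the prescribed size, and then applies Markov's inequality; the failure probability $\varepsilon$ is paid for by enlarging $A$ by a factor that is polynomial in $\varepsilon^{-1}$. Tracking this $\varepsilon$-dependence of $A$ through the Hölder/band-limiting step is precisely what yields a sparsity threshold of the form \eqref{eq:thm123_condition}, with the constant $C(q)$ absorbing the $q$-dependent factors from the discrete restriction machinery. On the resulting good event the displayed ratio is strictly below $\tfrac12$ for every nonzero $h\in V$ under condition \eqref{eq:thm123_condition}, so the null-space criterion of the first step delivers unique recovery with probability at least $1-\varepsilon$.

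The main obstacle is the probabilistic extension estimate in the third step. One must control $\|h\|_q$ uniformly over the whole cone of $S$-band-limited functions rather than at a single $h$, so the moment computation has to be upgraded to a uniform bound via an $\varepsilon$-net or chaining argument over the unit sphere of $V$, and the cardinality of that net must be absorbed into $C(q)$ without degrading the $1-\varepsilon$ probability. Matching the interpolation exponents and the precise dependence of $A$ on $\varepsilon$ and $N$ so that they reproduce \eqref{eq:thm123_condition} exactly is the delicate bookkeeping on which the whole argument rests.
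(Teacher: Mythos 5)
First, a framing point: the paper you were asked to prove this statement from contains \emph{no proof of it} --- Theorem \ref{thm:probability_recovery_logan} is quoted verbatim from \cite{burstein2025fourierminimizationimputationtime} purely for comparison with Theorem \ref{thm:new_recovery}. So the benchmark is the argument in that cited source, which indeed has your overall architecture: the null-space criterion for $\ell^1$ minimization (your step 1, which is correct and standard), H\"older, and a $\Lambda(q)$-type extension estimate for the random set $S$. The genuine gaps are in your steps 2 and 3.

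Step 2, as written, is quantitatively too lossy to ever produce the stated threshold. Your lower bound $\|h\|_1 \ge N^{d/2}|S|^{-1/2}\|h\|_2$ wastes a factor $|S|^{1/2} = N^{d/q}$. Unpacking normalizations, the extension estimate in counting-measure norms reads $\|h\|_q \le C(q)\,N^{d(2-q)/(2q)}\|h\|_2$, and feeding both into your ratio bound gives
\[
\frac{\sum_{x\in E}|h(x)|}{\|h\|_1} \;\le\; C(q)\,|E|^{(q-1)/q}\,N^{-d(q-2)/q},
\]
so the criterion $<\tfrac12$ only yields $|E| \lesssim C(q)^{-q/(q-1)} N^{d(q-2)/(q-1)}$, whose power of $N$ is strictly less than $d$; no bookkeeping turns this into a condition of the shape $|E| < N^d\cdot(\cdots)^{2q/(q-2)}$. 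The correct move is to lower-bound $\|h\|_1$ using the \emph{same} $\Lambda(q)$ estimate via interpolation: with normalized norms, $\|h\|_{L^2} \le \|h\|_{L^1}^{\theta}\|h\|_{L^q}^{1-\theta}$ with $\theta = \frac{q-2}{2(q-1)}$, combined with $\|h\|_{L^q}\le C(q)\|h\|_{L^2}$, gives $\|h\|_{L^1} \ge C(q)^{-q/(q-2)}\|h\|_{L^2}$, and then the null-space criterion becomes $(|E|/N^d)^{(q-1)/q}\, C(q)^{2(q-1)/(q-2)} < \tfrac12$, i.e.\ $|E| < N^d\, 2^{-q/(q-1)}\, C(q)^{-2q/(q-2)}$. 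This is what produces the exponent $\frac{2q}{q-2}$ appearing in \eqref{eq:thm123_condition}; incidentally, the inequality as transcribed in the paper, with $(C(q)/\varepsilon)$ raised to a \emph{positive} power multiplying $N^d$, is vacuous for small $\varepsilon$ and is evidently a garbled form of a condition of this type with $(\varepsilon/C(q))^{2q/(q-2)}$ (the $\varepsilon$ entering when Markov's inequality is applied to the random $\Lambda(q)$ constant).

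Step 3 buries the real content. The uniform estimate $\|h\|_{L^q} \le C(q)\|h\|_{L^2}$ over the entire subspace of $S$-band-limited functions, for a uniformly random $S$ of size $N^{2d/q}$, is precisely Bourgain's theorem on random $\Lambda(q)$ sets, and the cited paper invokes it as a black box rather than proving it. Your plan --- a moment bound on the extension operator, Markov, then an upgrade ``via an $\varepsilon$-net or chaining'' --- would fail as described: a single-function moment estimate plus a union bound over a net of the unit sphere of $V$ cannot work, because the net has cardinality exponential in $\dim V = |S| = N^{2d/q}$, which cannot be absorbed into a constant $C(q)$; obtaining uniformity over the cone of band-limited functions is exactly the hard part of Bourgain's proof and requires its entropy/decoupling machinery. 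If you instead cite that theorem and replace your step 2 with the interpolation argument above, your step 1 then assembles into the intended proof.
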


While these results are theoretically stronger, they also require stronger assumptions on the set of missing frequencies. The refined additive recovery condition in Theorem \ref{thm:new_recovery} applies to non-random missing sets, which is more general than the results obtained in \cite{burstein2025fourierminimizationimputationtime} that require randomness. 

Additionally, even under the assumption that the set of missing frequencies is chosen randomly, Theorems \ref{thm:probability_recovery_least_squares} and \ref{thm:probability_recovery_logan} only hold with high probability, whereas Theorem \ref{thm:new_recovery} provides a deterministic guarantee and incorporates a finer combinatorial structure (additive energy), which can allow recovery in situations where previous sparsity-based bounds fail.


\section{Background on Signal Recovery}
Signal recovery is a fundamental problem in applied mathematics and engineering: given incomplete or partially missing information about a signal, can we reconstruct the original signal exactly? In the discrete Fourier setting, this problem is typically phrased as follows. Let $f: \mathbb{Z}_N^d \to \mathbb{C}$ be a discrete signal, and let $\hat{f}$ denote its discrete Fourier transform. Suppose that the values of $\hat{f}$ are missing on a subset $S \subset \mathbb{Z}_N^d$. The central question is: under what conditions on the signal $f$ and the set of missing frequencies $S$ can the original signal $f$ be recovered exactly from the partial information?

A general theoretical condition that guarantees unique recovery of $f$ was established by Donoho and Stark \cite{DonohoStark}. Suppose we have three functions $f, r, g : \mathbb{Z}_N^d \to \mathbb{C}$ whose Fourier transforms agree on the non-missing frequencies $m \notin S$, and assume that their supports satisfy $|\mathrm{supp}(f)| = |\mathrm{supp}(r)| = |\mathrm{supp}(g)| = |E|$. If the product of the support size of $f$ and the size of the missing frequency set satisfies
\[
|E||S| < \frac{N^d}{2},
\] 
then it follows that $r = g = f$. In other words, under this condition, any two functions that match on the known frequencies and have the same support size must be identical, which establishes a uniqueness guarantee for recovery.

The proof, as given in \cite{DonohoStark}, elegantly leverages the classical uncertainty principle. The core idea is to consider the difference function $h = r - g$ of any two candidate signals $r$ and $g$ that agree with $f$ on the known frequencies. By construction, $\widehat{h}$ is supported on $S$ and $h$ is supported on at most $2|E|$ points. Applying the uncertainty principle to this nonzero function $h$ leads to the inequality $N^d \leq |\text{supp}(h)| , |\text{supp}(\widehat{h})| \leq 2|E||S|$, which contradicts the assumption $2|E||S| < N^d$. This contradiction forces $h \equiv 0$, proving uniqueness.

While Theorem \ref{thm:DS} provides a clean sufficient condition for unique identifiability, it is primarily an existence result. The practical task of actually reconstructing the signal $f$ from its partial Fourier measurements is a central problem in the field of sparse recovery or compressed sensing, which is studied in \cite{candes2005stablesignalrecoveryincomplete,Donoho2006CompressedSensing}. The insight that enabled this field was that the computationally intractable $\ell^0$-minimization (which directly seeks the sparsest solution) can be replaced by its convex relaxation, $\ell^1$-minimization, under certain conditions. The idea is that, given the known frequencies $\hat{f}(m)$ for $m \notin S$, the original signal $f$ can be recovered as the unique minimizer of the $\ell^1$ norm among all functions that agree with $\hat{f}$ on the known frequencies. Formally, the recovery problem is written as
\[
f = \operatorname{argmin}_{g} \|g\|_{L^1(\mathbb{Z}_N^d)} \quad \text{subject to } \hat{g}(m) = \hat{f}(m) \text{ for all } m \notin S.
\]

The guarantee that $\ell^1$ minimization indeed recovers $f$ under the condition $|E||S| < N^d/2$ is a central result also established by Donoho and Stark \cite{DonohoStark}.

As shown in \cite{DonohoStark}, for any function $h$ with $\mathrm{supp}(\widehat{h}) \subseteq S$, the following inequality holds:
\[
\|h\|_{L^1(E)} \le \frac{|E||S|}{N^d} \|h\|_{L^1(\mathbb{Z}_N^d)}.
\]

The recovery proof then proceeds by considering a candidate minimizer $g$ and the difference function $h = g - f$. Since $\widehat{h}$ is supported on $S$, the above inequality applies. Under the assumption $|E||S| < N^d/2$, one can show that the $\ell^1$ norm of $h$ on the complement of the true support $E^c$ must be strictly greater than its norm on $E$. This leads to a contradiction with the assumption that $g$ has a smaller or equal $\ell^1$ norm than $f$, thereby proving that $f$ is the unique minimizer. For the complete and detailed argument, we refer the reader to the original proof in \cite{DonohoStark}.
To illustrate these concepts concretely, consider a simple one-dimensional example:

\begin{example}
Let $f: \Z_4 \to \C$ be defined by its values:
\[
f(0) = 1, \quad f(1) = 0, \quad f(2) = 0, \quad f(3) = 2,
\]
which we denote compactly as $f = (1,0,0,2)$. The support of $f$ is 
\[
\supp(f) = E = \{0,3\} \subset \Z_4.
\]
The discrete Fourier transform (DFT) of $f$ is given by 
\[
\hat{f}(\xi) = \sum_{x=0}^3 f(x) e^{-2\pi i x\xi/4}, \quad \xi \in \Z_4.
\]
Thus, $\hat{f} = (3, 1-2i, -1, 1+2i)$.\\
Suppose during transmission, the Fourier coefficients at frequencies $S = \{1,2\}$ are lost. We attempt to recover $f$ by solving the $\ell^1$ minimization problem:
\[
\min_{g: \Z_4 \to \C} \norm{g}_1 = \min_{g: \Z_4 \to \C} \sum_{x=0}^3 |g(x)|
\]
subject to the constraints $\hat{g}(0) = 3$ and $\hat{g}(3) = 1+2i$.\\
We claim that $f$ is the unique minimizer. To prove this, let $g$ be any function satisfying the constraints. Write $g = f + h$, where $h: \Z_4 \to \C$ is such that $\hat{h}(0) = \hat{h}(3) = 0$. Let $h = (h_0, h_1, h_2, h_3)$ with $h_j = h(j)$. The constraints imply:
\begin{align}
\hat{h}(0) &= h_0 + h_1 + h_2 + h_3 = 0, \label{eq1} \\
\hat{h}(3) &= h_0 - i h_1 - h_2 + i h_3 = 0. \label{eq2}
\end{align}
Subtracting (\ref{eq2}) from (\ref{eq1}) gives:
\[
(h_0 + h_1 + h_2 + h_3) - (h_0 - i h_1 - h_2 + i h_3) = 0 \Rightarrow (1+i)h_1 + (1+i)h_2 = 0.
\]
Since $1+i \neq 0$, we obtain $h_1 + h_2 = 0$, so $h_2 = -h_1$.\\
Substituting $h_2 = -h_1$ into (\ref{eq1}):
\[
h_0 + h_1 - h_1 + h_3 = 0 \Rightarrow h_0 + h_3 = 0 \Rightarrow h_3 = -h_0.
\]
Now substitute $h_2 = -h_1$ and $h_3 = -h_0$ into (\ref{eq2}):
\[
h_0 - i h_1 - (-h_1) + i(-h_0) = h_0 - i h_1 + h_1 - i h_0 = (1-i)h_0 + (1-i)h_1 = 0.
\]
Again, since $1-i \neq 0$, we get $h_0 + h_1 = 0$, so $h_0 = -h_1$.
Thus, the general solution is:
\[
h = (h_0, h_1, h_2, h_3) = (-h_1, h_1, -h_1, h_1) = h_1(-1, 1, -1, 1), \quad h_1 \in \C.
\]
Then,
\[
f + h = (1 - h_1, h_1, -h_1, 2 + h_1).
\]
The $\ell^1$ norm is:
\[
\norm{f+h}_1 = |1 - h_1| + |h_1| + |-h_1| + |2 + h_1| = |1 - h_1| + 2|h_1| + |2 + h_1|.
\]
We now show that for any $h_1 \neq 0$, $\norm{f+h}_1 > \norm{f}_1 = |1| + |0| + |0| + |2| = 3$.\\
Consider the function $\phi: \C \to \R$ defined by:
\[
\phi(h_1) = |1 - h_1| + 2|h_1| + |2 + h_1|.
\]
By the triangle inequality:
\[
\phi(h_1) \geq |(1 - h_1) + (2 + h_1)| + 2|h_1| = |3| + 2|h_1| = 3 + 2|h_1|.
\]
Thus, if $h_1 \neq 0$, then $\phi(h_1) > 3$. Equality in the triangle inequality occurs if and only if all complex numbers point in the same direction and the terms have non-negative real parts when scaled appropriately. More rigorously, equality requires:
\begin{itemize}
\item $1 - h_1 = \lambda (2 + h_1)$ for some $\lambda \geq 0$,
\item $h_1 = \mu (2 + h_1)$ for some $\mu \geq 0$.
\end{itemize}
If $h_1 \neq 0$, these conditions cannot be satisfied simultaneously while maintaining $|1 - h_1| + |2 + h_1| = 3$. Alternatively, one can verify that the subgradient condition for optimality at $h_1 = 0$ is satisfied only when $h_1 = 0$.\\
Therefore, the unique minimizer occurs at $h_1 = 0$, i.e., $g = f$.
\end{example}

These results, combining uniqueness guarantees from uncertainty principles and constructive recovery via $\ell^1$ minimization, form the foundation for much of the modern theory of signal recovery. For a more comprehensive discussion, including extensions to higher dimensions and alternative reconstruction methods, see \cite{IosevichMayeli}.

The incorporation of additive combinatorial structure into this framework represents a significant theoretical advance. Sets with high additive energy correspond to highly structured configurations such as arithmetic progressions or cosets of subgroups, while sets with low additive energy exhibit less additive structure. The key observation is that less structured sets—those with lower additive energy—admit stronger recovery guarantees.

For a set $A \subseteq \mathbb{Z}_N^d$, the normalized additive energy $\Lambda_2(A)/|A|^3$ ranges from $|A|^{-1}$ (for generic sets) to $1$ (for cosets of subgroups). This normalization quantifies how far a set deviates from maximal additive structure, providing a parameter that our strengthened uncertainty principles exploit to improve recovery conditions.


\section{Proofs of Theorem \ref{thm:new_principle}}
\begin{proof}[Proof of Theorem \ref{thm:new_principle}]
Define $1_{x,y,a}=1_E(x)1_E(y)1_E(x+a)1_E(y+a)$. By Fourier inversion, we have
\begin{align*}
\sum_{m\in\Sigma}|\hat{f}(m)|^4
&=
N^{-2d}\sum_{m\in\Sigma}\sum_{x,y,z,w\in E}f(x)\overline{f(y)}f(z)\overline{f(w)}\chi(m\cdot(x-y+z-w))\\
&\leq
N^{-d}
\sum_{\substack{x+z=y+w\\ x,y,z,w\in E}}
f(x)\overline{f(y)}f(z)\overline{f(w)}\\
&\leq
N^{-d}
\sum_{x,y,a\in\Z_N^d}
|f(x)f(y)f(x+a)f(y+a)|1_{x,y,a}
\end{align*}
Therefore,
$$
N^{3d}\sum_{m\in\Sigma}|\hat{f}(m)|^4
\leq
N^{2d}\sum_{x,y,a\in\Z_N^d}|f(x)f(y)f(x+a)f(y+a)|1_{x,y,a}
$$

By Cauchy Schwarz and another application of Fourier inversion, we have
\begin{align*}
    &N^{2d}\sum_{x,y,a\in\Z_N^d}
    |f(x)f(y)f(x+a)f(y+a)|1_{x,y,a}\\
    &\leq
    N^{2d}\sum_{x,y,a\in\Z_N^d}
    |f(x)f(x+a)|^21_{x,y,a}\\
    &\leq
    \sum_{m_1,\dots,m_4}
    |\hat{f}(m_1)\hat{f}(m_2)\hat{f}(m_3)\hat{f}(m_4)|
    \left|
    \sum_{x,y,a\in\Z_N^d}
    \chi(x\cdot (m_1-m_2+m_3-m_4)\chi(a\cdot (m_3-m_4))1_{x,y,a}
    \right|\\
    &=
    \sum_{\substack{m_1,m_2,m_3,m_4\\
    N_1=m_1-m_2\\
    N_2=m_3-m_4}}
    |\hat{f}(m_1)\hat{f}(m_2)\hat{f}(m_3)\hat{f}(m_4)|
    \left|
    \sum_{x,y,a\in\Z_N^d}
    \chi(x\cdot N_1)\chi((x+a)\cdot N_2)
    1_{x,y,a}
    \right|
    \\
    &\leq
    I+II+III,
\end{align*}
where now we split into cases
\begin{itemize}
    \item[I.] $x\neq y$, $a\neq 0$
    \item[II.] $x=y$
    \item[III.] $x\neq y$, $a=0$
\end{itemize}
In what follows, we bound I, II, and III above by terms of the form $(\dots)\sum_{m\in\Sigma}|\hat{f}(m)|^4$ so that we may cancel to recover an improved uncertainty principle.\\
Case I: Notice that the sum
\begin{align*}
&\sum_{\substack{x,y,a\in\mathbb{Z}_N^d x\neq y,a\neq0}}1_E(x)1_E(y)1_E(x+a)1_E(y+a)
\end{align*}
counts the number of triples $(x, y, a)$ with $x \neq y$ and $a \neq 0$ such that all four points $x$, $y$, $x+a$, and $y+a$ lie in $E$. This is equivalent to counting quadruples $(x, y, z, w) \in E^4$ where $z = x + a$ and $w = y + a$ for some nonzero $a$, and $x \neq y$. Since $a = z - x = w - y \neq 0$, the conditions $x \neq y$ and $a \neq 0$ imply that $x \neq z$ and $z \neq w$. Thus, we have the identity:
\begin{align*}
&\{(x,y,a):x,y,x+a,y+a\in E, x\neq y, a\neq0\}\\
&\quad = \{(x,y,z,w)\in E^4 : z = x + a, w = y + a, a \neq 0, x \neq y \}\\
&\quad = \{(x,w,y,z)\in E^4 : x + w = y + z, x \neq z, z \neq w \}.
\end{align*}
This counts all additive quadruples $(x, y, z, w)$ where the pair $(z, y)$ is distinct from both $(x, w)$ and $(w, x)$. The total additive energy $\Lambda_2(E)$ counts all quadruples satisfying $x + w = z + y$. The only quadruples not counted in the above expression are the degenerate cases where $(z, y)$ is identical to $(x, w)$ or $(w, x)$. The number of these trivial quadruples is $|E|^2$ (for $(z,y) = (x,w)$) plus $|E|^2$ (for $(z,y) = (w,x)$), but the $|E|$ quadruples where $x = w$ in both pairs are counted twice. Therefore, the number of non-trivial quadruples is:
\begin{align*}
\Lambda_2(E)-2|E|^2+|E|
\end{align*}
Using the triangle inequality and the above identity, we see that
\begin{align*}
I &= \sum_{m_1,\dots,m_4} |\hat{f}(m_1)\dots\hat{f}(m_4)|
     \left| \sum_{\substack{x,y,a\in\Z_N^d\\x\neq y,a\neq0}}
     \chi(x\cdot N_1)\chi((a+x)\cdot N_2) 1_{x,y,a} \right| \\
&= \sum_{m_1,\dots,m_4} |\hat{f}(m_1)\dots\hat{f}(m_4)|
     \sum_{\substack{x,y,a\in\Z_N^d\\x\neq y, a\neq0}} 1_{x,y,a} \\
&= (\Lambda_2(E)-2|E|^2+|E|)\left(\sum_{m}|\hat{f}(m)|\right)^4 \\
&\leq (\Lambda_2(E)-2|E|^2+|E|)|\Sigma|^3
     \left(\sum_m|\hat{f}(m)|^4\right)
     \quad(\text{H\"older's inequality})
\end{align*}
Case II: 
Observe that 
\begin{align*}
    \sum_{N_1\in\Z_N^d}
    \left|\sum_x\chi(x\cdot N_1)1_E(x)\right|^2
    &=
    \sum_{N_1\in\Z_N^d}
    \sum_{x,\tilde{x}}
    \chi((x-\tilde{x})\cdot N_1)1_E(x)\\
    &=
    |E|N^d
\end{align*}
First, let us establish that for fixed $a,b \in \Z_N^d$ and $m_i \in \Sigma$, we know \[
\left(
    \sum_{\substack{m_1-m_2=a\\m_3-m_4=b}} 
    1
    \right) =  \left( \sum_{m_2 \in \Sigma} \mathbf{1}_{\Sigma}(m_2 + a) \right) \left( \sum_{m_4 \in \Sigma} \mathbf{1}_{\Sigma}(m_4 + b) \right) \leq |\Sigma| \cdot |\Sigma| = |\Sigma|^2
\]
Let $t=x+a$. Then, since $x=y$,
\begin{align*}
    II
    &=
    \sum_{m_1,\dots,m_4}
    |\hat{f}(m_1)\dots\hat{f}(m_4)|
    \left|
    \sum_{x,a\in\Z_N^d}
    \chi(x\cdot N_1)
    \chi((a+x)\cdot N_2)
    1_E(x)1_E(x+a)
    \right|\\
    &=
    \sum_{N_1,N_2\in\Z_N^d}
    \sum_{\substack{m_1-m_2=N_1\\m_3-m_4=N_2}}
    |\hat{f}(m_1)\dots\hat{f}(m_4)|
    \left|
    \sum_x
    \chi(x\cdot N_1)1_E(x)
    \right|
    \left|
    \sum_t
    \chi(t\cdot N_2)1_E(t)
    \right|\\
    &\leq
    \left(\sum_{N_1,N_2\in\Z_N^d}
    \left[
    \sum_{\substack{m_1-m_2=N_1\\m_3-m_4=N_2}}
    |\hat{f}(m_1)\dots\hat{f}(m_4)|
    \right]^2
    \right)^{1/2}\times\\
    &\quad\times
    \left(
    \sum_{N_1,N_2\in\Z_N^d}
    \left|\sum_x\chi(x\cdot N_1)1_E(x)\right|^2
    \left|\sum_t\chi(t\cdot N_2)1_E(t)\right|^2
    \right)^{1/2}\\
    &=
    \left(\sum_{N_1,N_2\in\Z_N^d}
    \left[
    \sum_{\substack{m_1-m_2=N_1\\m_3-m_4=N_2}}
    |\hat{f}(m_1)\dots\hat{f}(m_4)|
    \right]^2
    \right)^{1/2}
    \times
    \left(\sum_{N_1\in\Z_N^d}\left|\sum_x\chi(x\cdot N_1)1_E(x)\right|^2\right)\\
    &=
    |E|N^d\left(\sum_{N_1,N_2\in\Z_N^d}
    \left[
    \sum_{\substack{m_1-m_2=N_1\\m_3-m_4=N_2}}
    |\hat{f}(m_1)\dots\hat{f}(m_4)|
    \right]^2
    \right)^{1/2}\\
    &\leq
    |E|N^d
    \left(
    \sum_{N_1,N_2}\left(\sum_{\substack{m_1-m_2=N_1\\ m_3-m_4=N_2}}
    |\hat{f}(m_1)\dots\hat{f}(m_4)|^2\right)
    \left(
    \sum_{\substack{m_1-m_2=N_1\\m_3-m_4=N_2}}
    1
    \right)
    \right)^{1/2} \\
    &\leq
    |E||\Sigma|N^d
    \left(\sum_{m_1,\dots,m_4}|\hat{f}(m_1)\dots\hat{f}(m_4)|^2\right)^{1/2}\\
    &=
    |E||\Sigma|N^d
    \left(\sum_m|\hat{f}(m)|^2\right)^2\\
    &\leq
    |E||\Sigma|^2N^d\left(\sum_{m}|\hat{f}(m)|^4\right)
    \quad(\text{Holder's inequality})
\end{align*}
Case III: 
We have that

\begin{align*}
    III
    &=
    \sum_{m_1,\dots,m_4}|\hat{f}(m_1)\dots\hat{f}(m_4)|
    \left|
    \sum_{\substack{x,y\in\Z_N^d\\x\neq y}}
    \chi(x\cdot (m_1-m_2+m_3-m_4))1_E(x)1_E(y)
    \right|\\
    &=
    (|E|-1)
    \sum_{m_1,\dots,m_4}
    |\hat{f}(m_1)\dots\hat{f}(m_4)|
    \left|
    \sum_{x\in E}
    \chi(x\cdot (m_1-m_2+m_3-m_4))
    \right|
\end{align*}
Let us look at
\begin{align*}
    &\sum_{m_1,\dots,m_4}|\hat{f}(m_1)\dots\hat{f}(m_4)|
    \left|
    \sum_{x\in E}
    \chi(x\cdot(m_1-m_2+m_3-m_4))
    \right|\\
    &=
    \sum_{M\in\Z_N^d}
    \sum_{m_1-m_2+m_3-m_4=M}
    |\hat{f}(m_1)\dots\hat{f}(m_4)|
    \left|
    \sum_{x\in E}
    \chi(x\cdot M)
    \right|\\
    &\leq
    \left(
    \sum_{M\in\Z_N^d}
    \left(
    \sum_{m_1-m_2+m_3-m_4=M}
    |\hat{f}(m_1)\dots\hat{f}(m_4)|
    \right)^2
    \right)^{1/2}\\
    &\quad\times
    \left(
    \sum_{M\in\Z_N^d}
    \left|
    \sum_{x\in E}\chi(x\cdot M)
    \right|^2
    \right)^{1/2}\\
    &\leq
    \left(
    \sum_{M\in\Z_N^d}
    \left(\sum_{m_1-m_2+m_3-m_4=M}1\right)
    \left(\sum_{m_1-m_2+m_3-m_4=M}|\hat{f}(m_1)\dots\hat{f}(m_4)|^2\right)
    \right)^{1/2}
    \\
    &\quad\times
    |E|^{1/2}N^{d/2}\\
    &\leq
    (\max_M|\{(m_1,m_2,m_3,m_4)\in\Sigma^4\mid
    m_1-m_2+m_3-m_4=M\}|)^{1/2}\\
    &\quad\times
    \left(\sum_{m_1,m_2,m_3,m_4}|\hat{f}(m_1)\dots\hat{f}(m_4)|^2\right)^{1/2}\\
    &\quad\times
    |E|^{1/2}N^{d/2}\\
    &\leq
    (\max_M|\{(m_1,m_2,m_3,m_4)\in\Sigma^4\mid
    m_1-m_2+m_3-m_4=M\}|)^{1/2}\\
    &\quad \times
    |E|^{1/2}|\Sigma|N^{d/2}
    \left(\sum_m|\hat{f}(m)|^4\right)
\end{align*}
Estimating the number of quadruples $$| (m_1, m_2, m_3, m_4) \in \Sigma ^4: m_1 +m_3 = m_2 +m_4 + M|.$$ We can say, that by shifting $m_4$ by M, it is the same as $$ | (m_1, m_2, m_3, m_4) \in \Sigma \times \Sigma \times \Sigma \times (\Sigma +M): m_1 +m_3 = m_2 +m_4 |$$
Now, using bound 5.2 from \cite{2025additiveenergyuncertaintyprinciple}, this quantity is bounded by 
\[
\big(\Lambda_2(\Sigma)^3 \, \Lambda_2(\Sigma + M)\big)^{1/4}
= \Lambda_2(\Sigma)
\tag*{\text{as shifting by $M$ does not change the additive energy of $\Sigma$}}
\]
Hence, combining all three parts, we obtain

\begin{align*}
    N^{3d}\sum_{m\in\Sigma}|\hat{f}(m)|^4
    &\leq I+II+III\\
    &\leq
    \left((\Lambda_2(E)-2|E|^2+|E|)|\Sigma|^3+
    |E||\Sigma|^2N^d+
    |E|^{1/2}(|E|-1)|\Sigma|\Lambda_2(\Sigma)N^{d/2}
    \right)\\
    &\quad\times
    \sum_{m}|\hat{f}(m)|^4.
\end{align*}
By canceling $\sum_{m\in\Sigma}|\hat{f}(m)|^4$ from both sides, rearranging terms, and taking a cube root we recover the improved uncertainty principle
\begin{align*}
    N^d
    \leq
    |\Sigma|
    \left(
    \Lambda_2(E)
    -|E|^2
    \left(1-\frac{N^d}{|E||\Sigma|}\right)
    -|E|(|E|-1)
    \left(1-\sqrt{\frac{N^d}{|E||\Sigma|}}\sqrt{\frac{\Lambda_2(\Sigma)}{|\Sigma|^3}}\right)
    \right)^{1/3}.
\end{align*}
\end{proof}

\section{Proof of Theorem \ref{thm:new_recovery}}
    \begin{proof}[Proof of Theorem \ref{thm:new_recovery}]
    Assume, there exists $g : \mathbb{Z}_N^d \to \mathbb{C}$ such that $g \neq f$ and 
    \[
    \hat{g}(m) = \hat{f}(m) \text{ for } m \notin S \quad \text{and} \quad |\supp(g)| = |\supp(f)| = |E|.
    \]
    Then, let $f = g+ h$, where $h: \mathbb{Z}_N^d \to \mathbb{C}$. Because $h = f-g$, then $|\supp(h)| = |T| = |\supp(f)-\supp(g)| \leq 2|E|$. We also know that because $\hat{g}(m) = \hat{f}(m) \text{ for } m \notin S$, then $\supp(\hat{h}) = Q \subseteq S$. Hence by Theorem \ref{thm:new_principle}, if we exchange $\supp(f)$ for $\supp(\hat{f})$, we get 
    \[
    N^d\leq
|T|
\left(
\Lambda_2(Q)
-|Q|^2
\left(1-\frac{N^d}{|Q||T|}\right)
-|Q|(|Q|-1)
\left(1-\sqrt{\frac{N^d}{|Q||T|}}\sqrt{\frac{\Lambda_2(T)}{|T|^3}}\right)
\right)^{1/3}
    \]
    We want to get a contradiction with a condition of the theorem to prove that our $f$ is unique. 
    Let's take both sides of the equation to the third power: \[
        N^{3d}\leq
|T|^3
\left(
\Lambda_2(Q)
-|Q|^2
\left(1-\frac{N^d}{|Q||T|}\right)
-|Q|(|Q|-1)
\left(1-\sqrt{\frac{N^d}{|Q||T|}}\sqrt{\frac{\Lambda_2(T)}{|T|^3}}\right)
\right)
    \]
    We know that \[
    |T|^3(\Lambda_2(Q) - 2|Q^2| + |Q|) \leq 8|E|^2 (\Lambda_2(S) - 2|S^2| + |S|),
    \]
    because the quantity in the parenthesis on the left represents the number of non-trivial parallelograms in $Q$. Since $Q \subseteq S$, then for $S$ that quantity would be larger. 
    For \begin{align*}
            |T|^3&\left(\frac{|Q|N^d}{|T|} + |Q|^2\sqrt{\frac{N^d}{|Q||T|}}\sqrt{\frac{\Lambda_2(T)}{|T|^3}} + |Q|\left(\sqrt{\frac{N^d}{|Q||T|}}\sqrt{\frac{\Lambda_2(T)}{|T|^3}}\right)\right) \\[10pt]
            &\leq |T|^2|Q|N^d + |T| |Q|^{3/2}\sqrt{N^d\cdot\Lambda_2(T)} + |T||Q|^{1/2} \sqrt{N^d\cdot\Lambda_2(T)}\\[10pt]
            & \leq 4|E|^2|S|N^d + 2|E| |S|^{3/2}\sqrt{N^d\cdot K \cdot (2|E|)^{\alpha}} + 2|E||S|^{1/2} \sqrt{N^d\cdot K \cdot (2|E|)^{\alpha}}. \\
    \end{align*}
    Hence, if we combine the two, we would get \[
    8|E|^3 \left( \Lambda_2(S) - |S|(|S|-1) \left[1-\sqrt{\dfrac{K}{(2|E|)^{(3-\alpha)}}} \sqrt{\dfrac{N^d}{2|E||S|}}\right] - |S|^2\left(1-\frac{N^d}{2|E||S|}\right)\right). 
    \]
    From the statement of the theorem, we know 
    \[
        8|E|^3 \left( \Lambda_2(S) - |S|(|S|-1) \left[1-\sqrt{\dfrac{K}{(2|E|)^{(3-\alpha)}}} \sqrt{\dfrac{N^d}{2|E||S|}}\right] - |S|^2\left(1-\frac{N^d}{2|E||S|}\right)\right) < N^{3d}.
    \]
    Hence, combining all the inequalities we get \[
    N^{3d} < N^{3d},
    \]
    which is a contradiction, which means $f$ is unique.
    \end{proof}

\section{Future work}
\noindent Several directions for further research arise naturally from our results.
\medskip \\
\noindent\textit{Extension to Gowers $U^k$ Norms.}
A natural extension of the additive energy uncertainty principle would be to use Gowers $U^k$ norms.
\begin{definition}[Gowers $U^k$ norm]\label{def:Gowers norm}
    Let $k\geq 2$ and $f:\Z_N^d\to\C$. Then, $\|f\|_{U^k}$ is the unique positive real $2^k$-th root of 
\[
\|f\|_{U^k}^{2^k}
=
\frac{1}{N^{d(k+1)}}
\sum_{x\in\Z_N^d}
\
\sum_{\substack{h_1,\dots,h_k\in\Z_N^d\\h=(h_1,\dots,h_k)}}
\
\prod_{w\in\{0,1\}^k}
\Conj^{|w|}f(x+w\cdot h)
\]
where $\Conj^\ell(z)$ denotes $\ell$-fold conjugation.
\end{definition}
It is not immediately obvious that the right hand side is a positive real number, without which this definition is not valid. This can be proven using the inductive definition of the $U^k$ norm and can be found in \cite[Chapter 11]{Tao-Vu-2006}. 
Additive energy can be related to the $U^2$ norm via the formula $\Lambda(S)=N^{3d}\|1_S\|_{U^2}^4$. Rewriting the additive energy uncertainty principle using this formula, we obtain
\[
1\leq |E|\cdot\|1_\Sigma\|_{U^2}^{4/3}.
\]
This suggests the following formula.
\begin{conjecture} 
Let $f:\Z_N^d\to\C$ be a signal with support $E$ and Fourier support $\Sigma$. Then,
\begin{equation}
    1\leq|E|\cdot\|1_\Sigma\|_{U^k}^{2^k/(k+1)}.
\end{equation}
\end{conjecture}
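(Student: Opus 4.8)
The plan is to prove the equivalent, and more transparent, formulation obtained by raising both sides to the power $k+1$: writing $C_k(\Sigma):=N^{d(k+1)}\|1_\Sigma\|_{U^k}^{2^k}$ for the number of combinatorial $k$-dimensional cubes $(x,h_1,\dots,h_k)$ all of whose $2^k$ vertices $x+w\cdot h$ lie in $\Sigma$, the conjecture asserts exactly
\[
N^{d(k+1)}\le |E|^{k+1}\,C_k(\Sigma).
\]
For $k=2$ this is precisely $N^{3d}\le|E|^3\Lambda_2(\Sigma)$, the additive-energy principle of Theorem \ref{thm:add_uncertainty}. By Fourier inversion the roles of $E$ and $\Sigma$ may be interchanged, so it is equivalent to establish the dual bound $N^{d(k+1)}\le|\Sigma|^{k+1}C_k(E)$, and I would work with whichever side proves more convenient.

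My first approach would be to mirror the structure of the argument in Section 3, now replacing the additive quadruple by the full combinatorial cube and the single Cauchy--Schwarz step by the iterated Gowers--Cauchy--Schwarz inequality. Concretely, one starts from a $2^k$-linear expression in $f$ indexed by the vertices $w\in\{0,1\}^k$ of a cube, namely $\sum_{x,h}\prod_{w}\Conj^{|w|}f(x+w\cdot h)$, in which the indicator $\prod_w 1_E(x+w\cdot h)$ of a $k$-cube supported in $E$ plays the role of the object $1_{x,y,a}$ from the base case. The Gowers--Cauchy--Schwarz inequality (that is, $2^k-1$ successive applications of Cauchy--Schwarz, one per cube direction) is then used to decouple the values of $f$ from the cube constraint, reducing the weighted cube sum to $C_k(E)$ times a power of a single Fourier moment $\sum_{m\in\Sigma}|\hat f(m)|^{2^k}$; this is the exact analogue of the passage from $|f(x)f(y)f(x+a)f(y+a)|$ to $|f(x)f(x+a)|^2$ in Case I. A final application of Hölder's inequality on the frequency side, $\big(\sum_{m\in\Sigma}|\hat f(m)|\big)^{2^k}\le|\Sigma|^{2^k-1}\sum_m|\hat f(m)|^{2^k}$ together with its refinements, extracts the power $|\Sigma|^{k+1}$, after which the common Fourier factor cancels from both sides and taking a $(k+1)$-st root yields the claimed inequality. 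Tracking exponents, the factor $N^{d(k+1)}$ produced by the $k+1$ separate phase summations (one over $x$ and one over each $h_i$) is exactly what matches the target exponent $2^k/(k+1)$.

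The hard part --- and the reason this is stated as a conjecture rather than a theorem --- lies precisely in the step that for $k=2$ is trivial: producing the $k$-cube constraint in $E$ from the spectral data. In the base case a single modulus power $|\hat f(m)|^4$ Fourier-inverts into a product of four copies of $f$ whose one surviving phase $\chi\big(m\cdot(x-y+z-w)\big)$, after summation in $m$, collapses to the single additive equation defining a parallelogram, i.e.\ a $2$-cube. For $k\ge 3$ no single power $|\hat f(m)|^{2^k}$ can reproduce the $k$ independent linear constraints that define a genuine combinatorial cube: a direct computation shows that the Fourier image of the $U^k$ Gowers form of $f$ is supported not on primal cubes but on the dual family of frequency tuples $(m_w)_w$ annihilated by every discrete derivative $\sum_{w_i=1}(-1)^{|w|}m_w=0$, and this $(2^k-k-1)$-dimensional locus does not coincide with the $(k+1)$-dimensional cube locus for any $k\ge3$. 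This is the same structural fact that prevents $\|1_\Sigma\|_{U^k}$ from having any closed Fourier expression beyond $k=2$, and it blocks the clean cancellation that drives the base case; in particular, neither the monotonicity $\|1_\Sigma\|_{U^2}\le\|1_\Sigma\|_{U^k}$ nor the density bound $\|1_\Sigma\|_{U^k}\ge\|1_\Sigma\|_{U^1}$ suffices, since both give the wrong power of $|E|$.

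To circumvent this I would fall back on the recursive definition $\|1_\Sigma\|_{U^k}^{2^k}=\mathbb{E}_h\,\|1_{\Sigma\cap(\Sigma-h)}\|_{U^{k-1}}^{2^{k-1}}$ and argue by induction on $k$, observing that $\Sigma\cap(\Sigma-h)$ is the Fourier support of $\hat f(\cdot)\,\overline{\hat f(\cdot+h)}$, whose inverse transform is, up to a modulation, a convolution of $f$ with a reflected copy of $\overline f$ and is therefore supported in $E-E$. Applying the inductive hypothesis uniformly in $h$ and then averaging would in principle close the induction; the genuine obstacle here is the loss incurred by passing from $E$ (size $|E|$) to $E-E$ (size up to $|E|^2$), which degrades the bound from the desired $|E|^{-(k+1)}$ to a useless $|E|^{-2k}$. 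Recovering this loss --- presumably by exploiting cancellation in the average over $h$ rather than bounding each slice independently --- is where I expect the real work, and any genuinely new idea, to be concentrated.
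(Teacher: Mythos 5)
You should be aware at the outset that the paper itself does not prove this statement: it appears in the Future Work section precisely as a conjecture, and the only supporting evidence the paper supplies is the equality case, namely that for $f=1_{y+H}$ with $H\leq\Z_N^d$ a subgroup one has $|\supp(1_{y+H})|\cdot\|1_{H^\perp}\|_{U^k}^{2^k/(k+1)}=1$ and symmetrically for $H^\perp$. Your submission, by your own account, is not a proof either: it consists of two candidate strategies together with an explanation of why each fails, so the honest verdict is that a genuine gap remains --- indeed the entire content of the statement remains open. That said, your preliminary reductions are correct and worth recording: raising both sides to the power $k+1$ converts the conjecture into the cube-count inequality $N^{d(k+1)}\le|E|^{k+1}C_k(\Sigma)$ with $C_k(\Sigma)=N^{d(k+1)}\|1_\Sigma\|_{U^k}^{2^k}$; the case $k=2$ is exactly the cube of Theorem \ref{thm:add_uncertainty}, via $\Lambda_2(\Sigma)=N^{3d}\|1_\Sigma\|_{U^2}^4$; and your claim that soft arguments cannot work is accurate --- Gowers-norm monotonicity combined with the known $U^2$ case gives only $\|1_\Sigma\|_{U^k}^{2^k/(k+1)}\ge|E|^{-3\cdot 2^k/(4(k+1))}$, which meets the required $|E|^{-1}$ precisely when $3\cdot 2^k\le 4(k+1)$, i.e.\ only at $k=2$, while the density bound $\|1_\Sigma\|_{U^k}\ge|\Sigma|/N^d$ combined with $|E||\Sigma|\ge N^d$ gives a power of $|E|$ on the wrong side of $-1$ as well.

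The two obstructions you identify are real, and you close neither. In the strategy that mirrors Section 3, the failure occurs at the very first step: for $k=2$, the moment $\sum_{m\in\Sigma}|\hat f(m)|^4$ Fourier-inverts into a sum over additive quadruples (that is, $2$-cubes) in $E$, and this is the engine of the whole argument; for $k\ge3$ no moment $\sum_{m\in\Sigma}|\hat f(m)|^{2^k}$ reproduces the $k$ independent linear constraints defining a $k$-cube, because $\|\cdot\|_{U^k}$ admits no Fourier-diagonal expression beyond $k=2$. You state this obstruction correctly, but stating it is not overcoming it. In the inductive strategy via $\|1_\Sigma\|_{U^k}^{2^k}=\mathbb{E}_h\|1_{\Sigma\cap(\Sigma-h)}\|_{U^{k-1}}^{2^{k-1}}$, the slice $\Sigma\cap(\Sigma-h)$ is the support of $\hat f(\cdot)\overline{\hat f(\cdot+h)}$, whose inverse transform lives on $E-E$ rather than $E$, and applying the inductive hypothesis slice-by-slice replaces $|E|$ by $|E-E|\le|E|^2$, destroying the target exponent; you observe that one would need to exploit cancellation in the average over $h$, but you do not exhibit any such mechanism. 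What you have written is therefore a competent obstruction analysis that explains \emph{why} the statement is posed as a conjecture, and it would be a reasonable addition to the Future Work discussion; but it cannot be accepted as a proof, and if anything were to be added to it, it should at least include the coset equality-case verification, which is the one piece of evidence the paper actually provides and which your text omits.
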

Let $1_{y+H}$ be the indicator function of a coset $y+H$ of a subgroup $H\leq \Z_N^d$. Then, $\supp(1_{y+H})=y+H$ and $\supp(\hat{1}_{y+H})=H^\perp=\{m\in\Z_N^d\mid m\cdot x=0\ \forall x\in H\}$. Then, using the definition of the Gowers $U^k$ norm, it is straightfowards to check $\|1_{y+H}\|_{U^k}^{2^k/(k+1)}=\frac{|H|}{N^d}$ and $\|1_{H^\perp}\|_{U^k}^{2^k/(k+1)}=\frac{|H^\perp|}{|N^d|}$. Hence, 
\[
|\supp(1_{y+H})|\cdot\|1_{H^\perp}\|_{U^k}^{2^k/(k+1)}=|H|\cdot\frac{|H^\perp|}{N^d}=1
\]
and, similarly, $|\supp(\hat{1}_{y+H})|\cdot\|1_{y+H}\|_{U^k}^{2^k/(k+1)}=|H^\perp|\frac{|H|}{N^d}=1$. Therefore, the conjecture holds with equality when $f(x)=1_{y+H}(x)$ or $\hat{f}(x)=1_{y+H}(x)$.
\medskip \\
\noindent\textit{Higher-Order Additive Energies.}
Another approach is inspired by the identity $\Lambda_2(E) = |E|^2$. It is natural to investigate uncertainty principles involving higher-order quantities such as 
\[
    \Lambda_{k+1}(E) - \Lambda_k(E)(\dots),
\]
which enumerate non-degenerate $(k+1)$-dimensional parallelepipeds. Beyond the classical additive energy, one may consider alternative frameworks such as the number of solutions to 
\[
    a+b+c+d = e+f+g+h,
\]
which arise from the Fourier expansion of 
\[
    \sum_{x \in \mathbb{Z}_N^d} |\hat{f}(x)|^8.
\]
These higher-order energies could lead to new structural parameters for uncertainty principles.\medskip \\
\noindent\textit{Algorithmic Extensions.}
On the algorithmic side, while $\ell_1$- and $\ell_2$-minimization methods are standard in compressed sensing, it is natural to ask whether analogous recovery guarantees can be obtained via minimization in the $U_k$-norm. Developing such algorithms would directly leverage higher-order additive structure.
\medskip \\
\noindent\textit{Broader Extensions.}
More broadly, extensions to other finite abelian groups and eventually to continuous domains remain open problems. The adaptation to noisy measurements, rather than complete erasures, is also essential for applications in signal processing.
\medskip \\
In summary, the incorporation of higher-order additive energies, alternative combinatorial invariants, and $U_k$-based algorithms may lead to substantial refinements of uncertainty principles and recovery guarantees.

\section{Acknowledgements}
This paper was written as part of the SMALL REU Program 2025. We appreciate the support of Williams College, Yale University and the Finnerty Fund, as well as funding from NSF grant DMS-2241623. The first listed author was supported by The Winston Churchill Foundation of the United States. The seventh and eighth listed authors were supported by Dr. Herchel Smith Fellowship Fund. A.I. was supported in part by the National Science Foundation under grants no. HDR TRIPODS-1934962, NSF DMS 2154232, and NSF DMS 2506858 during work on this paper.

\bibliographystyle{alpha}
\bibliography{biblio}
\ \\
\end{document}